\newcommand\TK[1]{\color{ProcessBlue}#1 \color{black}}
\newcommand\IB[1]{\color{ForestGreen}#1 \color{black}}
\definecolor{Purple}{cmyk}{.5,0.75,0,0}
\definecolor{Green}{cmyk}{1,0,1,0}
\definecolor{Blue}{cmyk}{1,1,0,0}
\theoremstyle{plain}
\newtheorem{theorem}{Theorem}[section]
\newtheorem{obs}[theorem]{Observation}
\newtheorem{prop}[theorem]{Proposition}
\newtheorem{conjecture}[theorem]{Conjecture}
\newtheorem*{min genus algo}{Minimal genus algorithm}
\theoremstyle{definition}
\newtheorem{example}[theorem]{Example}
\theoremstyle{remark}
\title[Crosscap numbers via state codes]{Crosscap numbers of alternating links via state codes}
\author{Isaias Bahena, Thomas Kindred, and Jason Parsley}
\date{\today}
\begin{document}

\maketitle

\begin{abstract}
We describe a way of encoding a Kauffman state as a set of tuples, similar to a Gauss code.  Then we describe a procedure for using these state codes to determine the unoriented genus and crosscap number of any prime alternating knot or link. Finally, we compute these values for all such links through 14 crossings and all such knots through 19 crossings (this data is new for links with 10-14 crossings and knots with 14-19 crossings), and we identify several intriguing patterns in the resulting data.   \end{abstract}

\section{Introduction}

Every knot or link $L\subset S^3$ has many {\it spanning surfaces}: compact, connected surfaces $F$ embedded in $S^3$ with $\partial F=K$. Some are 1-sided, and some are 2-sided.  The {\it complexity} $\beta_1(F)$ of such a surface is the rank of its first homology group, or equivalently the number of cuts required to reduce $F$ to a disk, or the number of holes in $F$.  The {\it unoriented genus} $\Gamma(K)$ is the smallest complexity among all (1- or 2-sided) spanning surfaces for $K$, and the {\it crosscap number} $\gamma(K)$ is the minimal complexity among all 1-sided spanning surfaces for $K$:
\begin{align*}
\Gamma(K)&=\min\{\beta_1(F):~F\text{ is a spanning surface for }K\},\\
\gamma(K)&=\min\{\beta_1(F):~F\text{ is a 1-sided spanning surface for }K\}.
\end{align*}

Computing the unoriented genus and crosscap number of an arbitrary knot remains an open and challenging problem.  When $K$  has a nontrivial alternating diagram $D$, however, Adams and Kindred \cite{ak} prove that both $\Gamma(K)$ and $\gamma(K)$ are realized by {\it state surfaces} from $D$. These are spanning surfaces constructed from Kauffman states in a way akin to Seifert's algorithm. See \textsection\ref{S:Background} for details.

If $D$ has $c$ crossings and $F$ is a state surface from a Kauffman state with $s$ state circles, then $\beta_1(F)=1+c-s$. Thus, when $D$ is an alternating diagram of $K$ the problem of computing $\Gamma(K)$ reduces to finding a Kauffman state $x^*$ with the maximum number of state circles.  
A recent result of Cohen, Kindred, Lowrance, Shanahan, and Van Cott specifies how to determine $\gamma(K)$ from any such optimal state $x^*$ \cite{cklsv}. See Theorem \ref{T:CKLSV}.

One way to find $x^*$ is to list and compare {\it all} the Kauffman states of $D$ (or just the {\it adequate} ones).  Alternatively, Adams and Kindred provide an algorithm for using 1-gons, bigons, and triangles in $S^2\setminus D$ to find $x^*$.  They remark on page 2969 that 
\begin{quotation}
 It would be straightforward to write a computer program to determine the [unoriented genus and crosscap number] of all knots [and links] in the census of alternating knots$\dots$
\end{quotation}
We do just this, using Adams and Kindred's minimal genus algorithm to compute the unoriented genus and crosscap number of every prime alternating link through 14 crossings and every prime alternating knot through 19 crossings.  

In order to automate the minimal genus algorithm, we develop a new (to our knowledge) method, given a Gauss code of a diagram $D$, of encoding the Kauffman states of $D$ as sets of tuples, which we call {\it state codes}.  Each tuple corresponds to a state circle, and each entry in a tuple identifies a crossing incident to the corresponding state circle.

A brief outline: \textsection\ref{S:Background} reviews the aforementioned results of Adams-Kindred and Cohen-Kindred-Lowrance-Shanahan-Van Cott along with Gauss codes, \textsection\ref{S:State codes} introduces state codes and describes how they develop under Adams-Kindred's minimal genus algorithm, and \textsection\ref{S:Results} presents the results of our computations.  The data and code associated with this project will be available at \href{https://github.com/IsaiasBahena/minimal-genus-state-surfaces}{this repository}.

\section{Background}\label{S:Background}

Assume throughout that $D\subset S^2$ is a $c$-crossing, connected, alternating diagram of a knot or link $L\subset S^3$, with $c>0$, and denote the components of $L$ by $L_1,\dots,L_n$.

The running assumption that $D$ is alternating will allow us to give particularly simple definitions of Gauss codes. We will similarly enjoy the liberty afforded by the fact that $L$ and its mirror image have the same unoriented genus and crosscap number, which will allow us to be unconcerned by the fact that our simplified Gauss codes will not distinguish among $D$, the mirror image of $D$, and the diagram obtained from $D$ by reversing all of its crossings (the last two diagrams both represent the mirror image of $L$).

\subsection{Gauss codes}\label{S:Gauss codes}

In the following way, a {\it Gauss code} encodes $D$ (up to reflection within or across the projection sphere) 
as a list $G=[G_1,\dots,G_n]$ of $n$ lists $G_i$, in which each of the integers $1,\dots,c$ appears exactly twice in total.  On each component $L_i$ of $L$, choose a starting point $p_i$ and orientation.  

First, record the list $G_1$ corresponding to $L_1$ as follows. Starting at $p_1$, trace $L_1$ through $D$, following its orientation. As you do, assign a label to each crossing in sequential order starting with 1. When a crossing is encountered for the second time, its label is repeated. Continue this process until the starting point is reached again. The Gauss code is the sequence of numbers written at each crossing in the order of traversal. For example, the diagram of the figure-8 knot $4_1$ shown (with choice of starting point and orientation) left in Figure \ref{Fi:Fig8} has Gauss code $[[1, 2, 3, 1, 4, 3, 2, 4]]$.

\begin{figure}[H]
    \begin{center}
    \labellist \hair 4pt
    \pinlabel{$[[1, 2, 3, 1, 4, 3, 2, 4]]$} at 160 -21
    \small
    \pinlabel{1} at 125 50
    \pinlabel{2} at 125 150
    \pinlabel{$p_1$} at 230 92
    \pinlabel{3} at 105 240
    \pinlabel{4} at 210 240
    \endlabellist
        \raisebox{9pt}{\includegraphics[height=1.5in]{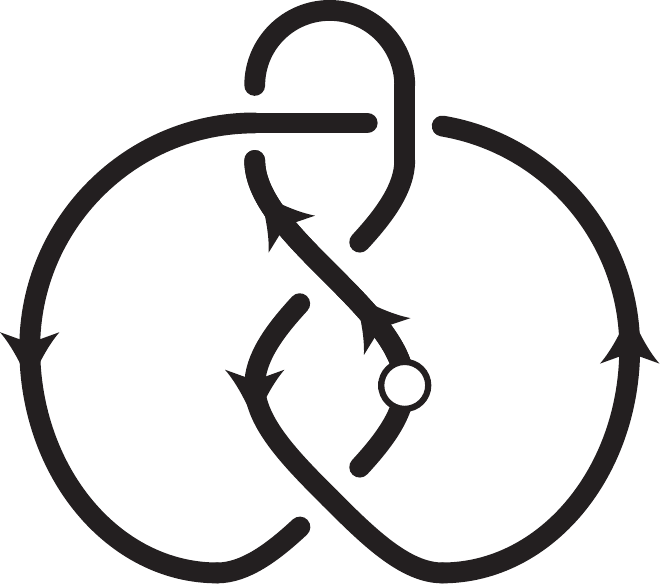}}
        \hspace{1in}
    \labellist  \hair 4pt
    \pinlabel{$ [[1, 2, 3, 4, 2, 5], {\color{Blue}[3, 5, 1, 4]}]$} at 160 -12
    \small
    \pinlabel{1} at 100 140
    \pinlabel{2} at 170 75
    \pinlabel{3} at 200 10
    \pinlabel{4} at 205 140
    \pinlabel{5} at 95 10
    \pinlabel{$p_1$} at 25 75
    \pinlabel{$p_2$} at 240 75
    \endlabellist
        \raisebox{9pt}{\includegraphics[height=1.5in]{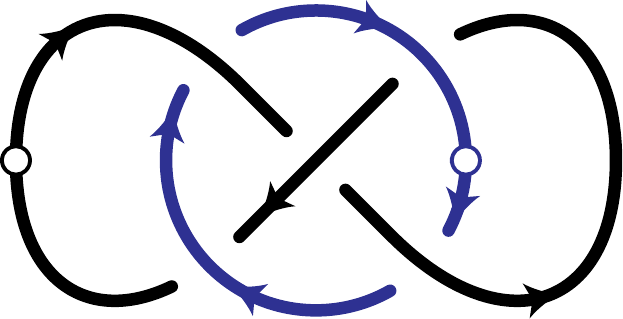}}
        \caption{Gauss codes for two oriented link diagrams with specified basepoints
         }
        \label{Fi:Fig8}
    \end{center}
\end{figure}
If $n>1$, repeat this process for $i=2,\dots,n$, in that order, maintaining all crossing labels as you go.  For example, the diagram of the Whitehead link $5^2_1$ shown (with choices of starting points and orientations) right in Figure \ref{Fi:Fig8} has Gauss code $ [[1, 2, 3, 4, 2, 5], [3, 5, 1, 4]]$.

There are typically many different Gauss codes for a given diagram $D$, depending on the ordering of the components $L_1,\dots,L_{n}$ of $L$ and on one's choice of starting point and orientation for each $L_i$.  Still, any two codes $G=[G_1,\dots,G_n]$ and $G''=[G''_1,\dots,G''_n]$ of $D$ are related by dihedrally permuting (i.e., possibly reversing and then cyclically permuting) each $G_i\to G'_i$, then arbitrarily permuting  $[G'_1,\dots,G'_n]\to[G'_{\sigma(1)},\dots,G'_{\sigma(n)}]$, and finally relabeling the crossings $1,\dots,c$ in $[G'_{\sigma(1)},\dots,G'_{\sigma(n)}]$ so that, whenever $1\leq i<j\leq c$, the first appearance of $i$ precedes that of $j$. In the dihedral permutation of $G_i$, reversal corresponds to reversing the orientation of $L_i$, and cyclic permutation corresponds to shifting the starting point $p_i$.

For example, in the diagram shown left in Figure \ref{Fi:Fig8}, shifting the starting point $p_1$ following the chosen orientation past one crossing changes its Gauss code from $[[1, 2, 3, 1, 4, 3, 2, 4]]$ to $[[2, 3, 1, 4, 3, 2, 4, 1]]$, which we relabel as $[[1,2,3,4,2,1,4,3]]$. Alternatively, keeping $p_1$ where it is in that diagram but reversing the orientation changes its Gauss code from $[[1, 2, 3, 1, 4, 3, 2, 4]]$ to $[[4,2,3,4,1,3,2,1]]$, which we relabel as $[[1,2,3,1,4,3,2,4]]$. The fact that this reversed Gauss code is identical to the original reflects the (fully amphichiral) symmetry of the figure-8 knot.

\subsection{Kauffman states and state surfaces}

There are two ways to resolve a given crossing in $D$. When $D$ is oriented, one of these ``smoothings'' respects the orientation on $D$, and we call it the {\it oriented smoothing}: $\raisebox{-.02in}{\includegraphics[width=.125in]{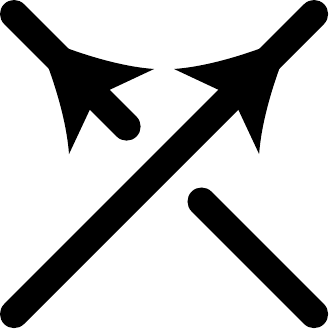}}
\longrightarrow\raisebox{-.02in}{\includegraphics[width=.125in]{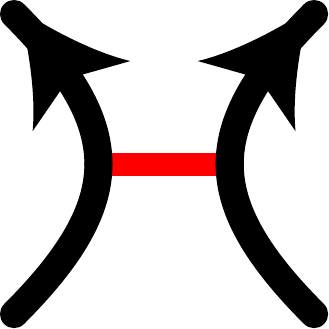}}$ (or the mirror image).  We call the other smoothing the {\it unoriented smoothing} of the crossing: $\raisebox{-.02in}{\includegraphics[width=.125in]{figures/CrossingOriented.pdf}}
\longrightarrow\raisebox{-.02in}{\includegraphics[width=.125in]{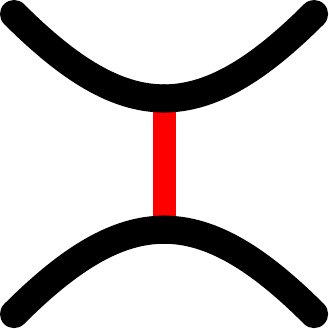}}$ (or the mirror image).

Smoothing all crossings in $D$ yields a {\it Kauffman state} $x$ of $D$, comprised of {\it state circles}.  Sometimes, we also include {\it crossing arcs} in a Kauffman state to keep track of where crossings were resolved in taking $D$ to $x$.  

Given a Kauffman state $x$ for $D$, one constructs its {\it state surface}  $F_x$ by capping off the state circles with mutually disjoint {\it state disks}, all on the same side of $S^2$, and attaching an appropriately half-twisted band at each crossing. See Figure \ref{Fi:State surface} for an example. %

\begin{figure}[H]
    \begin{center}
        \includegraphics[width=\textwidth]{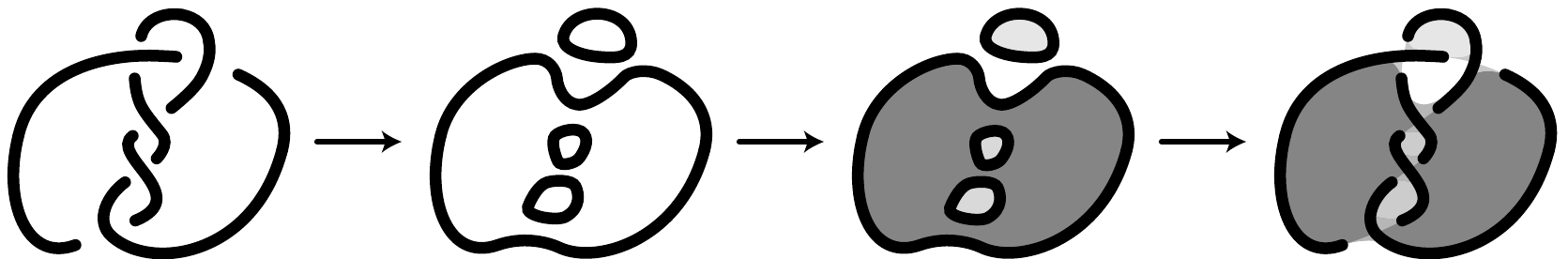}
        \caption{A diagram $D$, a Kauffman state $x$, and the state surface $F_x$}
        \label{Fi:State surface}
    \end{center}
\end{figure}

When $D$ has $c$ crossings and $x$ has $s$ state circles, the state surface $F_x$ admits a cell decomposition with $2c$ 0-cells (one on each overpass of $D$ and one on each underpass), $3c$ 1-cells (one ``vertical arc'' in each crossing band, and the rest running along the link), and $s$ 2-cells.  Therefore, $\chi(F_x)=s-c$.  Further, because $F_x$ is compact and connected with nonempty boundary, \[\beta_1(F_x)=1-\chi(F_x)=1+c-s.\]

Given a Kauffman state $x$ of $D$ (with crossing arcs), the associated {\it state graph} $G_x$ is constructed by collapsing each state circle to a point, so that $G_x$ has $s$ vertices and $c$ edges. In fact, $G_x$ can be viewed as a spine of $F_x$, and given a cycle $\gamma\subset G_x$, the number of crossing bands traversed by the corresponding cycle in $F_x$ will equal the edge-length of $\gamma$ in $G_x$.  One thus observes:

\begin{obs}
A state surface $F_x$ is 2-sided if and only if its state graph $G_x$ is bipartite.  Further, the state graph $G_x$ is simple if and only if:
\begin{itemize}
    \item Each crossing band joins {\it distinct} state disks (i.e., $x$ is {\it adequate}) and
    \item No two crossing bands join the same pair of state disks.
\end{itemize}
\end{obs}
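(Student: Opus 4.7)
The plan is to prove the two parts separately, with the first leaning on the topology of the band construction and the second being essentially combinatorial bookkeeping.

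For the equivalence \emph{$F_x$ is $2$-sided iff $G_x$ is bipartite}, I would first fix a normal vector to each state disk pointing to the same side of $S^2$ (the disks all lie on one side by construction). Since every crossing band is attached with a single half-twist, traversing one band causes the chosen normal to flip. The key observation is therefore: for any closed loop $\gamma$ in $F_x$, the normal vector returns to itself after traversing $\gamma$ if and only if $\gamma$ crosses an even number of bands. Because $F_x$ deformation retracts to its spine $G_x$ (identifying each state disk with a vertex), the loops in $F_x$ correspond to closed walks in $G_x$, and the number of bands traversed equals the edge-length of the walk. Hence $F_x$ is $2$-sided (every loop has a consistent normal) iff every cycle in $G_x$ has even length, which is exactly the definition of bipartiteness. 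I would use the cycle-space characterization to reduce from arbitrary loops to cycles.

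For the equivalence \emph{$G_x$ is simple iff both conditions hold}, I would simply unpack the definition of simple: no loop edges and no multi-edges. A loop edge in $G_x$ is a vertex-to-itself edge, which arises precisely from a crossing band whose two ends lie on the same state circle (hence the same state disk after capping). By the definition of adequacy (a state is adequate iff no crossing band joins a state disk to itself), the absence of loop edges is equivalent to $x$ being adequate, which is the first bullet. A multi-edge corresponds to two distinct crossing bands whose endpoints collapse to the same unordered pair of vertices, i.e., two bands joining the same pair of state disks; absence of multi-edges is the second bullet. Combining, $G_x$ is simple iff both bullets hold.

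The main obstacle I anticipate is the first direction: making rigorous the folklore statement that a half-twisted band flips the local normal, and then converting the bands-parity condition for loops into the purely graph-theoretic condition on cycles in $G_x$. This requires a careful choice of transverse framing on the state disks, an argument that the framing extends over each band with exactly one sign reversal, and an appeal to the standard fact that a connected surface with nonempty boundary is $2$-sided iff the framing of its spine extends to a global normal framing. None of the remaining steps should require more than routine checking.
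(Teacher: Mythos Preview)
Your proposal is correct and follows essentially the same approach as the paper. The paper does not give a formal proof of this observation; it merely precedes it with the sentence ``$G_x$ can be viewed as a spine of $F_x$, and given a cycle $\gamma\subset G_x$, the number of crossing bands traversed by the corresponding cycle in $F_x$ will equal the edge-length of $\gamma$ in $G_x$,'' and then writes ``One thus observes.'' Your argument fleshes out exactly this reasoning (half-twisted bands flip the normal, so orientability is equivalent to every cycle in the spine having even length, hence to bipartiteness), and your treatment of the second part is the obvious unpacking of ``simple graph'' that the paper leaves entirely to the reader.
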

    
\subsection{Unoriented genus and crosscap number of alternating links}

There is no known method to determine the unoriented genus or crosscap number of an arbitrary knot or link in $S^3$. In the alternating case, however, the following result provides just such a method.
\begin{theorem}[Adams and Kindred]\label{T:AK}
    Any nontrivial, connected, alternating diagram $D\subset S^2$ of a knot or link $L\subset S^3$ has Kauffman states $x,y$ whose state surfaces $F_x$ and $F_y$ realize the unoriented genus and crosscap numbers of $L$. 
\end{theorem}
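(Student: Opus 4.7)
The plan is to prove each claim via a normalization argument: start with a spanning surface $F$ realizing the invariant in question ($\Gamma(L)$ or $\gamma(L)$), then deform it through spanning surfaces of non-increasing complexity until it becomes a state surface.

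For $\Gamma(L)$, I would proceed as follows. First, isotope $F$ in $S^3$ so that near each crossing of $D$ it appears as one of the two standard half-twisted-band configurations; the chosen smoothings at all crossings determine a candidate Kauffman state. Next, consider $F\cap S^2$ outside of the crossing balls, which is a disjoint union of simple arcs and simple closed curves. Innermost closed curves bound disks in $S^2$; compressing $F$ along such disks (and discarding any sphere components) removes the closed curves without increasing $\beta_1(F)$. To handle the remaining arcs, use local modifications indexed by the small faces --- $1$-gons, bigons, and triangles --- of $S^2\setminus D$: each such move decreases the number of intersection arcs while preserving or increasing the number of state circles of the candidate state, and hence does not increase $\beta_1$. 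Iterating until no further simplification applies yields a state surface $F_x$ with $\beta_1(F_x)\leq\beta_1(F)=\Gamma(L)$, so by minimality $F_x$ realizes $\Gamma(L)$.

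For $\gamma(L)$, run the same procedure starting from a $1$-sided minimum-complexity surface. Each normalization move takes place in a neighborhood of $S^2$ and preserves the sidedness of $F$, so the output state surface $F_y$ is again $1$-sided and realizes $\gamma(L)$.

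The main obstacle is the local step in the $\Gamma(L)$ argument: one must enumerate, case by case, the ways $F$ can meet each type of small face of $S^2\setminus D$, and in every case produce a complexity-non-increasing simplification. This is precisely where the alternating hypothesis enters --- in a non-alternating diagram, some of these configurations could force $\beta_1(F)$ to grow, but the alternating condition, via Menasco-style product-disk and product-annulus arguments, rules such obstructions out and guarantees that the requisite small faces exist until optimality is reached. A secondary subtlety is verifying that the normalization respects sidedness in the crosscap-number case, since surgery along arbitrary disks need not preserve the $1$-sided property; one has to check that each move used is of a type (compression along a $2$-sided curve, or local isotopy in a collar of $S^2$) which does preserve sidedness.
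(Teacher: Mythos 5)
The paper does not prove this statement: Theorem \ref{T:AK} is imported verbatim from Adams--Kindred \cite{ak}, so there is no in-paper argument to compare yours against. Judged on its own terms, your proposal is a reasonable roadmap in the spirit of the actual proof in \cite{ak} (Menasco-style normalization of a minimal-complexity spanning surface relative to the projection sphere with crossing balls, followed by local simplifications organized around the small faces of $S^2\setminus D$), but as written it is an outline rather than a proof. The entire mathematical content of the theorem lives in the steps you defer: (i) showing that an arbitrary spanning surface can even be isotoped so that it meets each crossing ball in a single standard saddle/band --- a general minimal-complexity surface need not a priori interact with the crossing balls in any controlled way, and taming this is a substantial piece of the argument, not a preliminary isotopy; (ii) the case-by-case verification that every configuration of intersection arcs in a $1$-gon, bigon, or triangle admits a move that strictly decreases some complexity measure of the intersection without increasing $\beta_1$, which you explicitly name as ``the main obstacle'' and do not carry out; and (iii) a termination argument showing the process ends at a genuine state surface. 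You correctly identify that the alternating hypothesis must enter at step (ii), but identifying where the hypothesis is needed is not the same as using it.

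The crosscap-number half has an additional unresolved issue beyond the one you flag. Even granting that each individual move preserves $1$-sidedness, you would need to rule out the possibility that the procedure, started from a minimal-complexity $1$-sided surface, terminates at a $2$-sided state surface of strictly smaller complexity --- which is consistent with everything you've said and is exactly the situation that arises for links with $\gamma(L)=\Gamma(L)+1$ (cf.\ Theorem \ref{T:CKLSV} and the starred entries in Table \ref{T:10L}). Handling this requires an extra argument (in \cite{ak} and \cite{cklsv} this is where one compares the optimal $2$-sided state to nearby $1$-sided states), not merely a check that compressions along $2$-sided curves preserve sidedness. So the proposal should be regarded as a plausible plan whose key lemmas remain unproved.
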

This means that $\beta_1(F_x)=\Gamma(L)$, and $F_y$ is 1-sided with $\beta_1(F_y)=\gamma(L)$. Note that $x$ is a state of $D$ with the maximum number of state circles and therefore will always be adequate.

Thus, one could determine $\Gamma(L)$ and $\gamma(L)$ from $D$ by considering all of its states, choosing one, $x$, with the most state circles, and then, among the states whose state graphs are not bipartitite, choosing one, $y$, with the most state circles.  Better yet, one could just choose such an {\it optimal state} $x$, thus determining $\Gamma(L)$, and then apply the following result to determine $\gamma(L)$.

\begin{theorem}[Cohen, Kindred, Lowrance, Shanahan, and Van Cott \cite{cklsv}]\label{T:CKLSV}
    Suppose that $x$ is a state of a connected $c$-crossing prime alternating diagram $D\subset S^2$ of a link $L\subset S^3$, where $c\geq 3$, and let $G_x$ be its state graph. Then $\gamma(L)=\Gamma(L)+1$ if and only if $G_x$ is bipartite and simple. Otherwise, $\gamma(L)=\Gamma(L)$.
\end{theorem}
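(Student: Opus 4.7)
The plan is to split into three cases according to the structure of $G_x$---non-bipartite; bipartite and non-simple; bipartite and simple---and to use Theorem \ref{T:AK} throughout to reduce every statement about $\gamma(L)$ to one about state surfaces of $D$. The inequality $\gamma(L)\geq\Gamma(L)$ holds by definition, so in each case the question is only whether equality is forced.

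First, if $G_x$ is not bipartite, then by the observation in Section \ref{S:Background} the state surface $F_x$ itself is 1-sided, so $\gamma(L)\leq\beta_1(F_x)=\Gamma(L)$, and equality follows immediately.

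Second, suppose $G_x$ is bipartite but not simple. Bipartiteness forbids loops (odd cycles of length $1$), so $x$ is automatically adequate, and the failure of simplicity must stem from a pair of crossings $c_1,c_2$ whose bands both join the same two state disks $D_1,D_2$. The key step is to exhibit a modified state $x'$, obtained by re-smoothing in a neighborhood of $c_1\cup c_2$, such that (i) $x'$ has the same number of state circles as $x$ and (ii) $G_{x'}$ contains an odd cycle---essentially replacing the orientable bigon structure formed by the two parallel bands in $F_x$ with a non-orientable analogue. Verifying that such a replacement genuinely preserves the state circle count requires a small local analysis of how the strands near $c_1,c_2$ redistribute among the state circles, but in any case $F_{x'}$ would then be 1-sided of complexity $\Gamma(L)$, yielding $\gamma(L)\leq\Gamma(L)$ and hence equality.

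Third, the case where $G_x$ is bipartite and simple---requiring the bound $\gamma(L)\geq\Gamma(L)+1$---is the main obstacle. By Theorem \ref{T:AK}, $\gamma(L)$ is realized by a 1-sided state surface $F_y$, and the optimality of $x$ already forces $\beta_1(F_y)\geq\Gamma(L)$; the task is to exclude equality. I would try to show that any state $y$ whose state surface matches $F_x$ in complexity must itself have bipartite state graph, by classifying the local state-modifications (perhaps combined with flypes of $D$, in the spirit of the Tait flyping theorem) that preserve state circle count, and checking that starting from a bipartite \emph{and} simple graph $G_x$ none of these moves can introduce an odd cycle without simultaneously creating a loop or a multi-edge---thereby either making some intermediate state non-adequate or contradicting the simplicity of $G_x$. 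The hypotheses that $D$ is prime and that $c\geq 3$ should enter decisively here: primeness rules out the reducible local patterns (nugatory crossings, visible connect-sum spheres) that would otherwise allow bipartiteness to be broken without loss of circles, and $c\geq 3$ excludes the small cases (unknot and Hopf link) for which the claimed characterization genuinely fails.
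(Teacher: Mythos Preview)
The paper does not actually prove Theorem~\ref{T:CKLSV}; it is quoted from \cite{cklsv} and used as a black box throughout \textsection\ref{S:Background} and \textsection\ref{S:Program}. So there is no in-paper argument to compare your proposal against.

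Regarding your outline on its own merits: your Case~1 is fine, but Cases~2 and~3 both contain real gaps that you yourself flag. In Case~2, the claim that re-smoothing at the two parallel crossings $c_1,c_2$ yields a state $x'$ with the \emph{same} number of circles is not automatic. Changing the smoothing at $c_1$ merges the two incident circles, dropping the count by one; whether changing $c_2$ as well then re-splits the merged circle depends on the cyclic order of the four attaching arcs along the original circles, and in one of the two configurations it does not. You would need either to argue that the favourable configuration always occurs for an optimal state of a prime diagram, or to abandon the state-level modification and instead perform a direct geometric surgery on $F_x$ (replacing the annular piece spanned by the two parallel bands with a M\"obius-band piece) and check that the result remains embedded---which is closer to how such arguments typically run.

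Case~3 is, as you say, the heart of the matter, and your sketch (``classify circle-preserving local modifications and show none breaks bipartiteness when the graph is simple'') is not yet an argument: you have not identified what those modifications are, nor explained why simplicity of $G_x$ obstructs them. The actual proof in \cite{cklsv} requires substantially more machinery than what you have outlined here; if you want to reconstruct it, that reference is where to look.
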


The hypotheses in Theorem \ref{T:CKLSV} that $D$ is prime and $c\geq 3$ are vital, as the result does not extend to the 2-crossing diagram of the Hopf link, nor to alternating diagrams that have that 2-crossing diagram as a connect summand. Later, when we compute crosscap numbers for all prime alternating links in the tables through 14 crossings, this will mean that we need to create a special condition in our code which ends up being specific to the Hopf link, as the Python program we develop to compute the remaining crosscap numbers will otherwise rely on Theorem \ref{T:CKLSV}. See \textsection\ref{S:Program} for more details.

One can find an optimal Kauffman state $x$ of $D$ by listing all of its states, or at least the adequate ones, and choosing one with the most state circles.  Alternatively, Adams and Kindred describe a procedure which streamlines the search. Before describing that procedure, we present a well-known property satisfied by the disks of $S^2\setminus D$.  We call such a disk an {\it $m$-gon} if its boundary contains $m$ crossings.

\begin{prop}\label{P:123gon}
    Any (connected) $c$-crossing knot or link diagram $D\subset S^2$ admits an $m$-gon for some $m\leq 3$.
\end{prop}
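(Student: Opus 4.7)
The plan is a standard Euler characteristic count applied to the 4-valent planar graph obtained from $D$. View $D\subset S^2$ as a CW complex whose $0$-cells are the $c$ crossings, whose $1$-cells are the arcs of $D$ between consecutive crossings, and whose $2$-cells are the connected components of $S^2\setminus D$. Since $D$ is connected, each $2$-cell is an open disk, so this is indeed a CW decomposition of $S^2$.

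Next I would count. Every crossing is $4$-valent, so $V=c$ and $E=2c$. Euler's formula then gives
\[F \;=\; 2 - V + E \;=\; 2 - c + 2c \;=\; c+2.\]
For each integer $m\geq 0$, let $f_m$ denote the number of $m$-gons in $S^2\setminus D$, where a face's boundary is traversed as many times as it bounds (so a crossing appearing twice on a face contributes $2$ to its $m$-value). Then
\[\sum_{m\geq 0} f_m \;=\; F \;=\; c+2, \qquad \sum_{m\geq 0} m\,f_m \;=\; 2E \;=\; 4c,\]
the second equality holding because each edge of $D$ lies on the boundary of exactly two faces (counted with multiplicity).

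Now I would conclude by contradiction: suppose every face of $S^2\setminus D$ is an $m$-gon with $m\geq 4$. Then
\[4c \;=\; \sum_{m\geq 0} m\,f_m \;\geq\; 4\sum_{m\geq 0} f_m \;=\; 4(c+2) \;=\; 4c+8,\]
which is absurd. Hence some face is an $m$-gon with $m\leq 3$, as claimed.

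The argument is essentially mechanical once the CW structure is set up, so there is no real obstacle; the only subtlety worth flagging is the convention that a crossing appearing twice on the boundary of one region contributes twice to the $m$-count, which is exactly what makes $\sum m\,f_m = 2E$ hold and which matches the definition of ``$m$-gon'' used later in the minimal genus algorithm.
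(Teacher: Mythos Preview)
Your proof is correct and follows essentially the same Euler-characteristic argument as the paper: set up the CW structure with $V=c$, $E=2c$, $F=c+2$, and use the incidence identity $\sum m f_m = 4c$ to force some $f_m>0$ with $m\leq 3$. The only cosmetic difference is that the paper writes $\sum_m (4-m)f_m = 8$ and observes a positive summand is needed, while you phrase the same computation as a contradiction from assuming all $m\geq 4$.
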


\begin{proof}
    Note first that $D$ induces a cell decomposition of $S^2$ with $c$ vertices, one at each crossing, and $2c$ edges, following the rest of $D$; hence, $D$ cuts $S^2$ into $\chi(S^2)-c+2c=c+2$ disks. For each $m>0$, let $f_m$ denote the number of $m$-gons.  Then $\sum_{m}f_m=c+2$, and $\sum_mmf_m=4c$ because each crossing abuts exactly four regions (counted with multiplicity).  Therefore,
\[8=4(c+2)-4c=4\sum_{m}f_m-\sum_{m}mf_m=\sum_m(4-m)f_m.\]
To equal 8, the sum on the right must have at least one positive summand, so at least one of $f_1$, $f_2$, or $f_3$ is positive.
\end{proof}

Before presenting the aforementioned procedure, we digress just slightly further:

\begin{prop}\label{P:c2}
    Every $c$-crossing prime alternating knot or link $L$, with $c\geq 3$,  satisfies $\Gamma(L)\leq c/2$ and $\gamma(L)\leq c/2 $.
\end{prop}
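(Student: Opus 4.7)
The plan is to prove the two bounds separately, with the unoriented genus bound coming from the checkerboard (all-$A$ / all-$B$) states and the crosscap bound from combining Theorem~\ref{T:CKLSV} with a planarity estimate on the state graph.

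For $\Gamma(L)\le c/2$, I would start from the cell decomposition used in the proof of Proposition~\ref{P:123gon}: $D$ cuts $S^2$ into $c+2$ disks. Because $D$ is alternating, the checkerboard coloring of $S^2\setminus D$ splits these regions into two classes whose sizes $s_A$ and $s_B$ equal the numbers of state circles of the all-$A$ and all-$B$ Kauffman states $x_A,x_B$, so $s_A+s_B=c+2$. By pigeonhole, one of $x_A,x_B$ (call it $x$) has $s\ge c/2+1$ state circles, and then $\beta_1(F_x)=1+c-s\le c/2$. By Theorem~\ref{T:AK}, $\Gamma(L)$ is realized by some state surface, so $\Gamma(L)\le\beta_1(F_x)\le c/2$.

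For $\gamma(L)\le c/2$, apply Theorem~\ref{T:CKLSV} to an optimal state $x^*$: either $\gamma(L)=\Gamma(L)\le c/2$ and we are done, or $\gamma(L)=\Gamma(L)+1$ and $G_{x^*}$ is bipartite and simple. In the second case, the key observation is that $G_{x^*}$ is planar. Indeed, the ``state plus crossing arc'' diagram is a planar multigraph in $S^2$, and $G_{x^*}$ is obtained from it by contracting all of the arcs that lie along state circles, an operation that preserves planarity. A simple bipartite planar graph with $v$ vertices and $e$ edges satisfies $e\le 2v-4$ (by Euler's formula, since every face has boundary length at least $4$). Applying this with $v=s^*$ and $e=c$ gives $c\le 2s^*-4$, i.e., $s^*\ge c/2+2$, so $\Gamma(L)=1+c-s^*\le c/2-1$, and hence $\gamma(L)=\Gamma(L)+1\le c/2$.

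The main obstacle is the second bound: Theorem~\ref{T:CKLSV} by itself only yields $\gamma(L)\le\Gamma(L)+1\le c/2+1$, which is too weak. The leverage comes from recognizing that the ``bipartite and simple'' hypothesis of Theorem~\ref{T:CKLSV}, combined with planarity of the state graph, upgrades the generic bipartite bound $e\le s^{*2}/4$ to the sharper $e\le 2s^*-4$, which forces an extra state circle and exactly compensates for the $+1$ in the CKLSV formula.
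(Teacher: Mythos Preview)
Your proof is correct. For $\Gamma(L)\le c/2$ you and the paper do the same thing (the checkerboard states give $s_A+s_B=c+2$, hence one has at least $c/2+1$ circles); the appeal to Theorem~\ref{T:AK} is unnecessary, since any spanning surface bounds $\Gamma$ from above by definition.

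For $\gamma(L)\le c/2$ your route differs from the paper's. The paper stays with the smaller checkerboard surface $F_1$: if $F_1$ is $1$-sided we are done; if some light region is a bigon, Theorem~\ref{T:CKLSV} gives $\gamma=\Gamma$; otherwise every light region abuts at least four edges, and a direct edge count gives at most $c/2$ light regions, hence $\beta_1(F_1)\le c/2-1$ and $\gamma\le\beta_1(F_1)+1\le c/2$. You instead apply Theorem~\ref{T:CKLSV} to the \emph{optimal} state and invoke the standard Euler-formula inequality $e\le 2v-4$ for simple bipartite planar graphs. The two arguments are really the same Euler-characteristic computation in different clothing: the paper's ``every light region has at least four edges'' is exactly the ``every face has length at least $4$'' step in the derivation of $e\le 2v-4$, with the light regions playing the role of the faces of the planar embedding of the state graph. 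Your packaging is arguably cleaner, since it invokes Theorem~\ref{T:CKLSV} only for the optimal state (its intended setting) and replaces the case analysis by a single graph-theoretic inequality; the paper's version has the virtue of staying entirely within the concrete checkerboard picture.
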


\begin{proof}
    Let $D$ be a $c$-crossing prime alternating diagram of $L$. Consider a checkerboard shading of $S^2\setminus D$, and let $F_1$ and $F_2$, respectively, be the checkerboard surfaces associated to the darkly and lightly shaded regions. Then $\beta_1(F_1)+\beta_1(F_2)=c$, so 
    \[\Gamma(L)\leq \min\{\beta_1(F_1),\beta_1(F_2)\}\leq c/2.\]
    Assume without loss of generality that $\beta_1(F_1)\leq \beta_1(F_2)$.  If any lightly shaded region of $S^2\setminus D$ is incident to an odd number of crossings, then $F_1$ is 1-sided, and so $\gamma(L)\leq \beta_1(F_1)\leq c/2$; the same conclusion holds if any lightly shaded region of $S^2\setminus D$ is a bigon, due to Theorem \ref{T:CKLSV}.  We may thus assume that every lightly shaded region of $S^2\setminus D$ is incident to at least four crossings (in fact, an even number of crossings), and thus to at least four edges.  Since each of the $2c$ edges of $D$ is incident to exactly one lightly shaded region, it follows that there are at most $2c/4$ lightly shaded regions, hence that $\beta_1(F_1)\leq c/2-1$.  Ergo, $\gamma(L)\leq \beta_1(F_1)+1\leq c/2$.
\end{proof}
    %
%

Here is the previously mentioned procedure of Adams and Kindred:  

\begin{min genus algo}[Adams and Kindred]
One can find a Kauffman state $x$ of $D$ with the most state circles as follows.  Its state surface $F_x$ will realize the unoriented genus of $L$.
\begin{enumerate}
    \item Identify the smallest $m$-gon (1-gon, 2-gon, or 3-gon).
    \item Smooth the crossings forming this $m$-gon to create a state circle. If $m=3$, create a second branch of the algorithm, where these three crossings are smoothed the opposite way.
    \item Repeat until no crossings remain. (If the diagram becomes disconnected, complete the procedure separately on each component.)
    \item From the list of resulting Kauffman states, choose one with the most state circles.
\end{enumerate}
\end{min genus algo}

\begin{figure}[H]
    \begin{center}
    \labellist
    \pinlabel{$p_1$} at 230 90
    \pinlabel{$3$} at 125 45
    \pinlabel{$1$} at 125 150
    \pinlabel{$2$} at 125 255
    \endlabellist
        \includegraphics[height=1.5in]{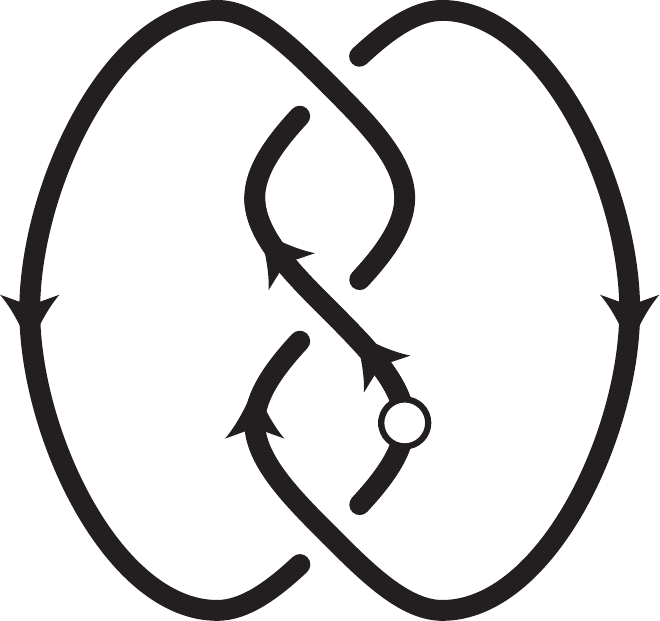}\raisebox{.5in}{$~\to~$}
            \labellist
    \pinlabel{$p_1$} at 230 90
    \pinlabel{{$3$}} at 125 45
    \pinlabel{{\color{red} $1$}} at 125 150
    \pinlabel{{\color{red} $2$}} at 125 255
    \endlabellist
    \includegraphics[height=1.5in]{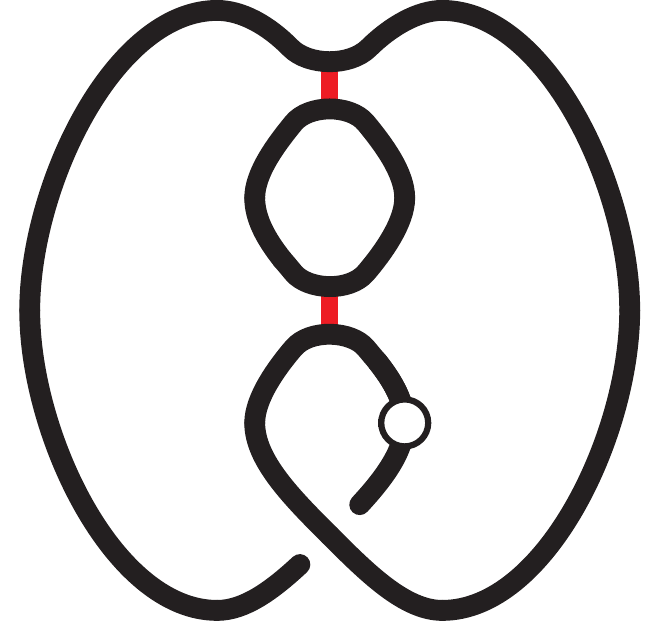}\raisebox{.5in}{$~\to~$}
        \labellist
    \pinlabel{$p_1$} at 230 90
    \pinlabel{{\color{red} $3$}} at 125 45
    \pinlabel{{\color{red} $1$}} at 125 150
    \pinlabel{{\color{red} $2$}} at 125 255
    \endlabellist
    \includegraphics[height=1.5in]{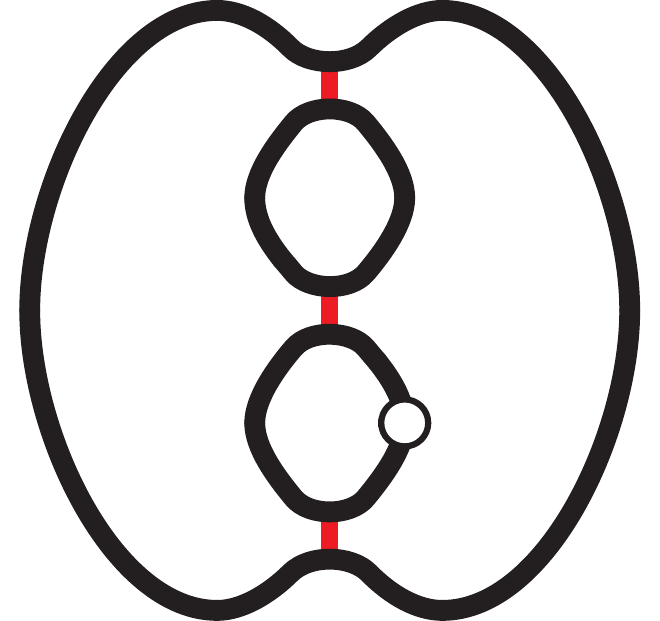}
        \caption{Applying the minimal genus algorithm to the trefoil}
        \label{Fi:minimal genus algorithm trefoil}
    \end{center}
\end{figure}
Figure \ref{Fi:minimal genus algorithm trefoil} shows the minimal genus algorithm being performed on the trefoil. 

\section{State codes}\label{S:State codes}

A {\it state code} is a list of tuples that encodes a Kauffman state, akin to how a Gauss code records a knot or link diagram.  Each tuple in the list describes a state circle, with each entry in the tuple identifying an incident crossing.  

Given a Gauss code $G$ for an alternating diagram $D$, we will describe how to modify $G$ while carrying out the minimal genus algorithm until we get an optimal state code (i.e., one with a maximal number of tuples, corresponding to a state with a maximal number of state circles).  At each step along the way (and for each branch of the procedure), we will have a hybrid {\it Gauss-state code}, which will be a list of three lists: (1) a ``near-Gauss'' code, (2) a list of tuples for state circles already formed, and (3) a list of which crossings have already been resolved.  A ``near-Gauss'' code may not quite be a Gauss code because some of its crossings may already have been resolved, but from now on we will simply call it a ``Gauss code'' nonetheless. Before we get into the details, we present a simple example.

\begin{example}\label{Ex:minimal genus algorithm trefoil}
    Consider the diagram $D$ of the trefoil shown left in Figure \ref{Fi:minimal genus algorithm trefoil}. Its Gauss code is $[[1, 2, 3, 1, 2, 3]]$. There are no 1-gons in $D$, but there are three bigons (one is the unbounded region) and two triangles. 
    
    Pick the bigon adjacent to crossings 1 and 2, and smooth crossings 1 and 2 so that this becomes a state circle, which we connect to the rest of the diagram via arcs to signify that the crossings are no longer there. The Gauss-state code at this stage, shown center in Figure \ref{Fi:minimal genus algorithm trefoil}, is 
    \[[[[1,3,2,3]],[(1,2)],[1,2]].\]
    Here, $[[1,3,2,3]]$ is the near-Gauss code (with the same choice of starting point and local orientation), $(1,2)$ is the state circle we have formed, and $[1,2]$ is the list of crossings we have smoothed.

    Now we consider the diagram obtained from this picture by deleting all state circles and crossing arcs.  (If it were disconnected, we would consider its components in isolation.) It has two 1-gons and a bigon--the unbounded region is a 1-gon, and the region that contained the state circle is a bigon because it abuts crossing 3 twice, and adjacent crossings are counted with multiplicity. Smooth the 1-gon adjacent to crossing arc 1 and crossing 3 (note that smoothed crossings do not factor into identifying an $m$-gon). 
    The Gauss-state code at this stage, shown right in Figure \ref{Fi:minimal genus algorithm trefoil}, would be 
    \[[[[3,2]],[(1,2),(1,3)],[1,2,3]],\] but all of the crossings have been resolved and so we can re-designate $[3,2]$ as a state circle, giving us the {\it state code} $[(1,2),(1,3),(3,2)]$ for this Kauffman state.    
\end{example}

Notice that in Example \ref{Ex:minimal genus algorithm trefoil} we got the Kauffman state with three state circles as opposed to one with two state circles or fewer. Indeed, the minimal genus algorithm guarantees that its output is an optimal Kauffman state, one with the most state circles.

Now, how do we use Gauss codes to implement the minimal genus algorithm without relying on visual diagrams? 

To answer this question, the remainder of this section details how each type of $m$-gon is detected and smoothed using only Gauss codes (and near-Gauss codes). We begin by explaining how to identify and smooth 1-gons, which are the simplest to detect. We then move onto bigons, which require us to handle four cases, depending on how the two strands of the bigon are oriented and whether or not they come from the same component of the link. Finally, we address triangles, which are the most complex and introduce the need for branching via ``triangle'' and ``anti-triangle'' smoothings. 

\subsection{Detecting and smoothing a 1-gon in a Gauss code}\label{S:1gon}
A \textbf{1-gon} is present in a Gauss code $G=[G_1,\dots,G_n]$ when a crossing is repeated in the cyclic reading of some component of the Gauss code with only smoothed crossings between its two appearances. We sometimes say that the two appearances of that crossing are ``effectively consecutive.'' For example, the Gauss code $[[1,3,2,3]]$ in the Gauss-state code $[[[1,3,2,3]],[(1,2)],[1,2]]$ from the middle stage in Example \ref{Ex:minimal genus algorithm trefoil} has two 1-gons, namely $3,2,3$ and $3,1,3$.  

Whenever we identify a 1-gon, we 
permute the component of the Gauss code in which the 1-gon appears so that it has the form $[\boldsymbol{a, s_0, a}, w_0]$, where \( \boldsymbol{a} \) is the crossing, \( \boldsymbol{s_0}\) consists only of smoothed crossings (and is possibly empty), and $w_0$ is an arbitrary part of the code. For the purposes of this paper, we find no need to relabel crossings after we perform these permutations; likewise for the rest of this section. 

\begin{figure}[H]
    \begin{center}
        \labellist
        \pinlabel{$\boldsymbol{a}$} at 265 160
        \pinlabel{$\boldsymbol{s_0}$} at 75 135
        \pinlabel{$p_1$} at 280 90
        \pinlabel{$w_0$} at 390 130
        \endlabellist
        \includegraphics[width=.4\textwidth]{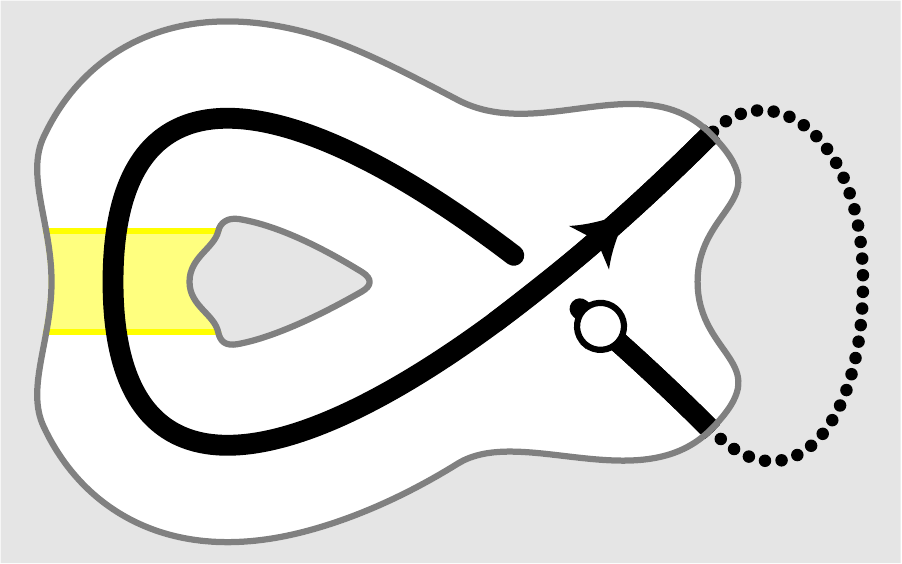}\raisebox{.75in}{$~\to~$}
                \labellist
        \pinlabel{{\color{red} $\boldsymbol{a}$}} at 260 160
        \pinlabel{$\boldsymbol{s_0}$} at 75 135
        \pinlabel{$p_1$} at 280 90
        \pinlabel{$w_0$} at 390 130
        \endlabellist
        \includegraphics[width=.4\textwidth]{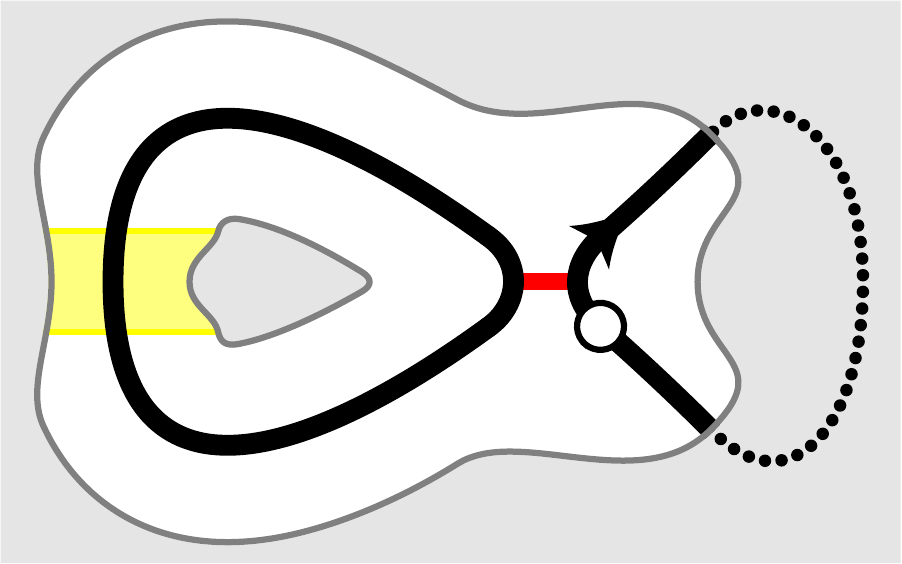}
        \caption{A 1-gon before (left) and after (right) we smooth it}
        \label{Fi:1Gon}
    \end{center}
\end{figure}

Once we detect a 1-gon and permute its component of the Gauss code to be $[\boldsymbol{a, s_0, a}, w_0]$, we smooth the crossing to convert the boundary of the 1-gon into a state circle, as shown in Figure \ref{Fi:1Gon}. The new state circle is $(\boldsymbol{a,s_0})$, the new Gauss code is $[[\boldsymbol{a},w_0],G_{2},\dots,G_n]$, and $\boldsymbol{a}$ is added to the list of smoothed crossings. A 1-gon smoothing always preserves orientation.

\begin{figure}[H]
    \begin{center}
        \labellist
        \pinlabel{$p_1$} at 90 90
        \pinlabel{5} at 40 125
        \pinlabel{1} at 150 125
        \pinlabel{2} at 260 125
        \pinlabel{3} at 350 100
        \pinlabel{4} at 350 225
        \endlabellist
        \includegraphics[height=1.5in]{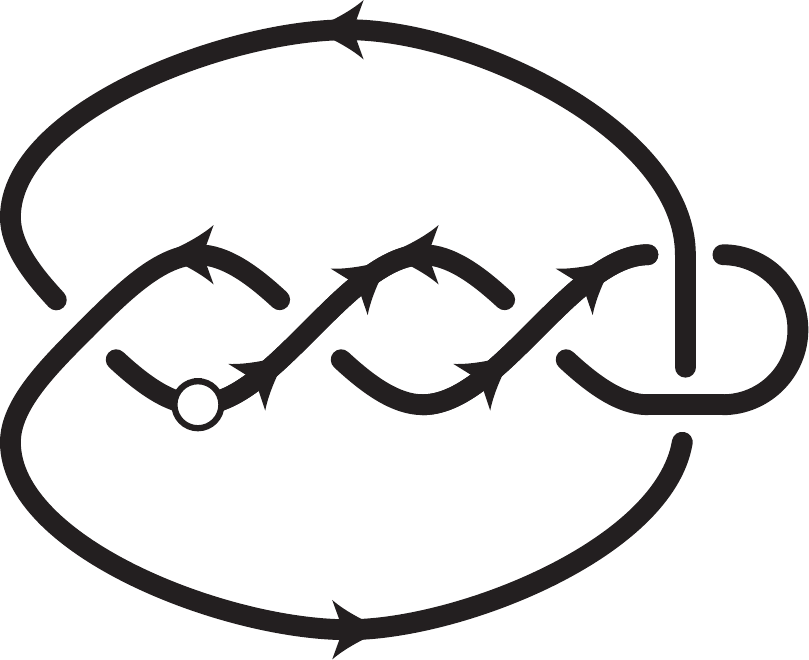}
        \caption{This diagram of the $5_2$ knot has three bigons.}
        \label{Fi:5_2}
    \end{center}
\end{figure}

\subsection{Detecting and smoothing a bigon in a Gauss code}\label{S:Bigon}
A bigon is present in a Gauss code when two distinct crossings appear effectively consecutively twice, meaning that only smoothed crossings appear in between them each time. For example, if we have a Gauss code of $[[1, 2, 3, 1, 4, 5, 2, 3, 5, 4]]$, as in Figure \ref{Fi:5_2}, then there are three bigons present. They are $(2, 3)$, $(4, 5)$, and $(1, 4)$. Notice that the second occurrence of $(4, 5)$ appears as $(5, 4)$, which is okay. Also note that the second occurrence of $(1, 4)$ wraps around the Gauss code, which is also okay because of the cyclic nature of Gauss codes. 

\begin{figure}[H]
    \begin{center}
        \labellist \small \hair 4pt
        \pinlabel{$\boldsymbol{a}$} at 180 175
        \pinlabel{$\boldsymbol{b}$} at 180 485
        \pinlabel{{\color{Green} $w_0$}} at 220 640
        \pinlabel{{\color{Purple} $w_1$}} at 220 20
        \pinlabel{{(1)}} at 400 30
        \tiny
        \pinlabel{$\boldsymbol{s_0}$} at 325 333
        \pinlabel{$\boldsymbol{s_1}$} at 112 333
        \small
        \pinlabel{$\boldsymbol{a}$} at 685 175
        \pinlabel{$\boldsymbol{b}$} at 685 485
        \pinlabel{{\color{Green} $w_0$}} at 725 620
        \pinlabel{{\color{Purple} $w_1$}} at 725 40
        \pinlabel{{(2)}} at 905 30
        \pinlabel{{(3)}} at 1410 30
        \tiny
        \pinlabel{$\boldsymbol{s_0}$} at 830 333
        \pinlabel{$\boldsymbol{s_1}$} at 617 333
        \small
        \pinlabel{$\boldsymbol{a}$} at 1190 175
        \pinlabel{$\boldsymbol{b}$} at 1190 485
        \pinlabel{{\color{black} $w_0$}} at 1120 605
        \pinlabel{{\color{Blue} $w_1$}} at 1340 605
        \pinlabel{{(4)}} at 1915 30
        \tiny
        \pinlabel{$\boldsymbol{s_0}$} at 1335 333
        \pinlabel{$\boldsymbol{s_1}$} at 1122 333
        \small
        \pinlabel{$\boldsymbol{a}$} at 1695 175
        \pinlabel{$\boldsymbol{b}$} at 1695 485
        \pinlabel{{\color{black} $w_0$}} at 1620 605
        \pinlabel{{\color{Blue} $w_1$}} at 1840 605
        \tiny
        \pinlabel{$\boldsymbol{s_0}$} at 1840 333
        \pinlabel{$\boldsymbol{s_1}$} at 1627 333
        \endlabellist   
        \includegraphics[width=\textwidth]{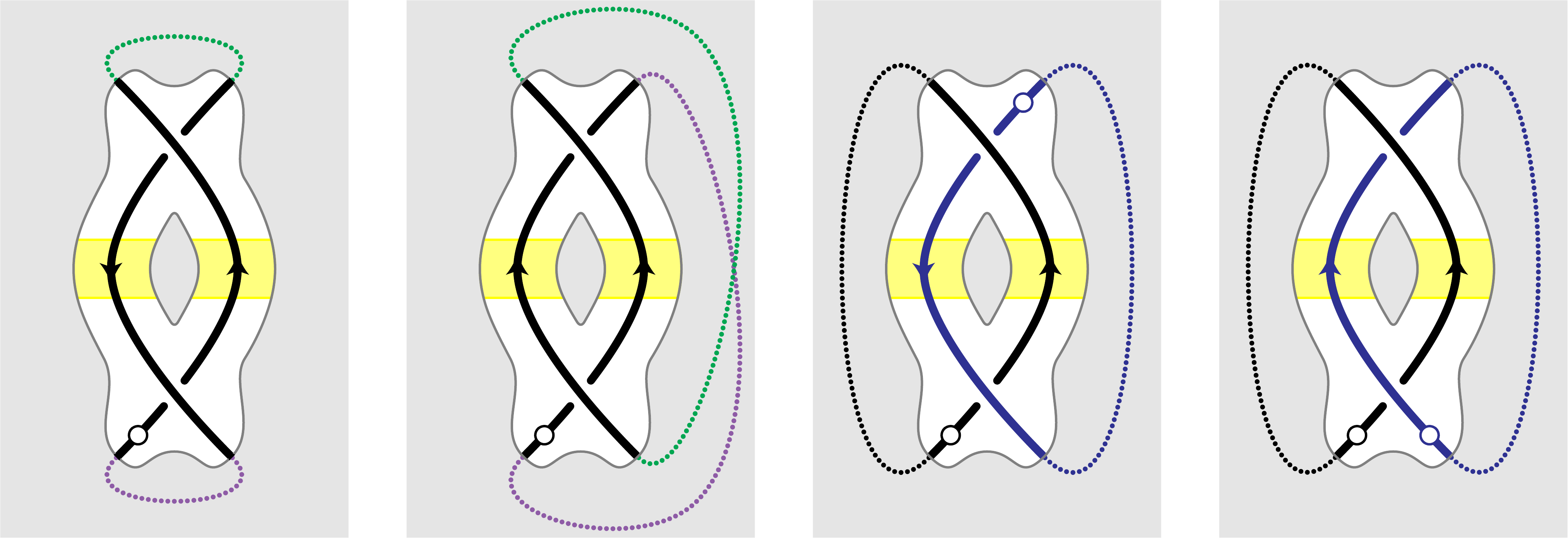}
        \caption{ The four different cases of bigons}
        \label{Fi:Detecting 2-gon}
    \end{center}
\end{figure}

Whenever we identify a bigon, we permute the components $G_i$ of the Gauss code so that the crossings $\boldsymbol{a}$ and $\boldsymbol{b}$ in the bigon appear in $G_1$, and possibly $G_2$, and then we permute $G_1$, and possibly $G_2$, so that their first two unsmoothed crossings are $\boldsymbol{a}$ and $\boldsymbol{b}$.

As shown in Figure \ref{Fi:Detecting 2-gon}, there are four different types of Gauss codes that these permutations may produce, depending on how the two strands of the bigon are oriented and whether or not they come from the same component of the link. After permuting the components $G_i$ of the Gauss code so that the 1-gon appears in $G_1$ and possibly $G_2$ and then permuting $G_1$ and possibly $G_2$, the four different formats for a bigon (following the four cases in Figure \ref{Fi:Detecting 2-gon} from left to right) are:

\begin{enumerate}
    \item oriented knot: $[[\boldsymbol{a, s_0, b}, w_0, \boldsymbol{b, s_1, a}, w_1],G_2,\dots,G_n]$,
    \item unoriented knot: $[[\boldsymbol{a, s_0, b}, w_0, \boldsymbol{a, s_1, b}, w_1],G_2,\dots,G_n]$,
    \item oriented link: $[[\boldsymbol{a, s_0, b}, w_0],[ \boldsymbol{b, s_1, a}, w_1],G_3,\dots,G_n]$, and
    \item unoriented link: $[[\boldsymbol{a, s_0, b}, w_0],[ \boldsymbol{a, s_1, b}, w_1],G_3,\dots,G_n]$.
\end{enumerate}

Here, $\boldsymbol{a}$ and $\boldsymbol{b}$ are crossings, $\boldsymbol{s_0}$ and $\boldsymbol{s_1}$ are segments of (components of) the Gauss code which contain only smoothed crossings, and $w_0$ and $w_1$ are arbitrary segments of the Gauss code.  The descriptor ``oriented'' means that the two strands of the bigon are oriented opposite each other, so the smoothings we perform are oriented; ``unoriented'' is the alternative.  Finally, ``knot'' here means that the two strands of the bigon come from the same link component, and ``link'' means the opposite. Note that every bigon from a Gauss code for a knot is in one of the first two cases, although the converse is false.

\begin{figure}[H]
    \begin{center}
        \labellist \small \hair 4pt
        \pinlabel{$\boldsymbol{a}$} at 180 175
        \pinlabel{$\boldsymbol{b}$} at 180 485
        \pinlabel{{\color{Green} $w_0$}} at 220 640
        \pinlabel{{\color{Purple} $w_1$}} at 220 20
        \tiny
        \pinlabel{$\boldsymbol{s_0}$} at 325 333
        \pinlabel{$\boldsymbol{s_1}$} at 112 333
        \small
        \pinlabel{{\color{red} $\boldsymbol{a}$}} at 685 175
        \pinlabel{{\color{red} $\boldsymbol{b}$}} at 685 485
        \pinlabel{{\color{Green} $w_0$}} at 725 640
        \pinlabel{{\color{Purple} $w_1$}} at 725 20
        \tiny
        \pinlabel{$\boldsymbol{s_0}$} at 830 333
        \pinlabel{$\boldsymbol{s_1}$} at 617 333
        \small
        \pinlabel{$\boldsymbol{a}$} at 1190 175
        \pinlabel{$\boldsymbol{b}$} at 1190 485
        \pinlabel{{\color{Green} $w_0$}} at 1230 620
        \pinlabel{{\color{Purple} $w_1$}} at 1230 40
        \pinlabel{(1)} at 470 300
        \pinlabel{(2)} at 1480 300
        \tiny
        \pinlabel{$\boldsymbol{s_0}$} at 1335 333
        \pinlabel{$\boldsymbol{s_1}$} at 1122 333
        \small
        \pinlabel{{\color{red} $\boldsymbol{a}$}} at 1695 175
        \pinlabel{{\color{red} $\boldsymbol{b}$}} at 1695 485
        \pinlabel{{\color{Green} $w_0$}} at 1735 620
        \pinlabel{{\color{Purple} $w_1$}} at 1735 40
        \tiny
        \pinlabel{$\boldsymbol{s_0}$} at 1840 333
        \pinlabel{$\boldsymbol{s_1}$} at 1627 333
        \endlabellist
        \includegraphics[width=\textwidth]{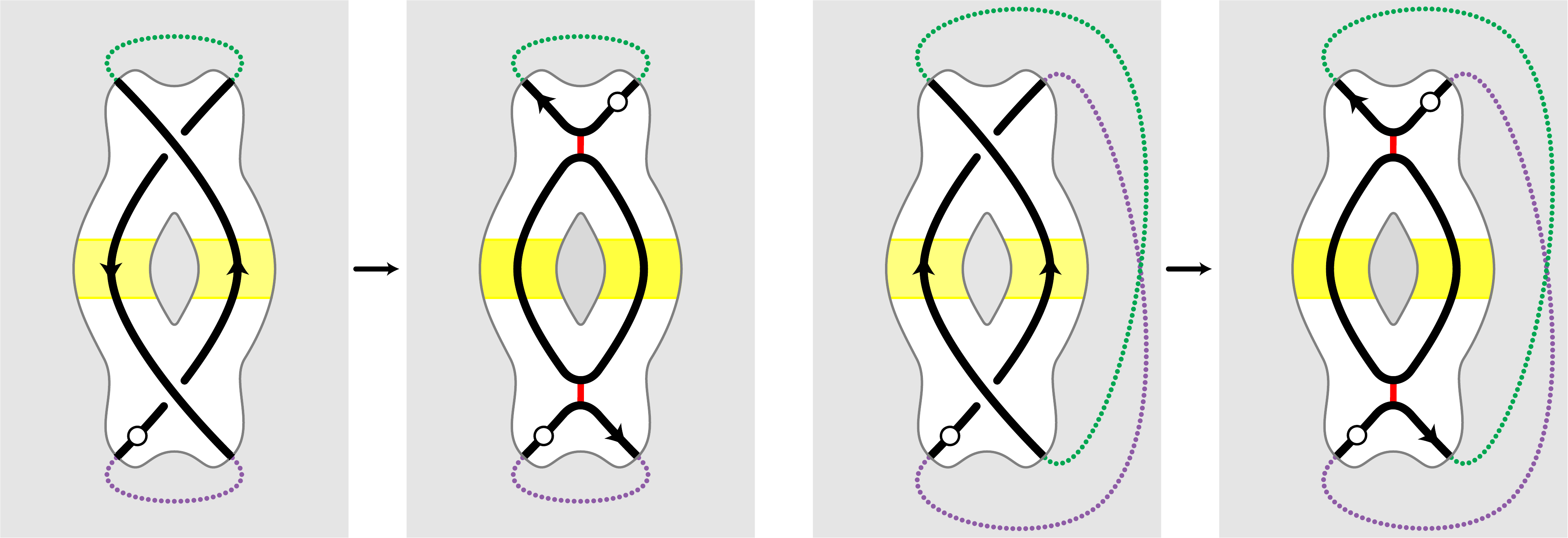}
        \caption{Smoothing a bigon formed by two strands of the same component of a knot or link}
        \label{Fi:Smoothing Knot bigon 1}
    \end{center}
\end{figure}

Once we detect a bigon and permute its component(s) of the Gauss code, we smooth the crossings so that the boundary of the bigon becomes a state circle.  We add $\boldsymbol{a}$ and $\boldsymbol{b}$ to the list of smoothed crossings. In the oriented cases, we add $(\boldsymbol{a,s_0,b,s_1})$ to the list of state circles, whereas in the unoriented cases we add $(\boldsymbol{a,s_0,b,\overline{s_1}})$. (Here, as in the sequel, the bar in $\mathbf{\overline{s_1}}$ indicates that the sequence $\boldsymbol{s_1}$ is reversed.) In each case, the Gauss code changes as follows (see Figures \ref{Fi:Smoothing Knot bigon 1} and \ref{Fi:Smoothing Knot bigon 2}).
\begin{align*}
(1)~&[[\boldsymbol{a, s_0, b}, w_0, \boldsymbol{b, s_1, a}, w_1],G_2,\dots,G_n]\to[[\boldsymbol{a},w_1], [\boldsymbol{b}, w_0],G_2,\dots,G_n],\\
(2)~&[[\boldsymbol{a, s_0, b}, w_0, \boldsymbol{a, s_1, b}, w_1],G_2,\dots,G_n]\to[[\boldsymbol{a},\overline{w_0},\boldsymbol{b}, w_1],G_2,\dots,G_n],\\
(3)~&[[\boldsymbol{a, s_0, b}, w_0],[ \boldsymbol{b, s_1, a}, w_1],G_3,\dots,G_n]\to[[\boldsymbol{a},w_1,\boldsymbol{b}, w_0],G_3,\dots,G_n] \text{, and}\\
(4)~&[[\boldsymbol{a, s_0, b}, w_0],[ \boldsymbol{a, s_1, b}, w_1],G_3,\dots,G_n]\to[[\boldsymbol{a},\overline{w_1},\boldsymbol{b}, w_0],G_3,\dots,G_n].
\end{align*}

\begin{figure}[H]
    \begin{center}
        \labellist \small \hair 4pt
        \pinlabel{$\boldsymbol{a}$} at 180 175
        \pinlabel{$\boldsymbol{b}$} at 180 485
        \pinlabel{{\color{black} $w_0$}} at 100 605
        \pinlabel{{\color{Blue} $w_1$}} at 340 605
        \tiny
        \pinlabel{$\boldsymbol{s_0}$} at 325 333
        \pinlabel{$\boldsymbol{s_1}$} at 112 333
        \small
        \pinlabel{{\color{red} $\boldsymbol{a}$}} at 685 175
        \pinlabel{{\color{red} $\boldsymbol{b}$}} at 685 485
        \pinlabel{{\color{black} $w_0$}} at 600 605
        \pinlabel{{\color{black} $w_1$}} at 840 605
        \tiny
        \pinlabel{$\boldsymbol{s_0}$} at 830 333
        \pinlabel{$\boldsymbol{s_1}$} at 617 333
        \small
        \pinlabel{$\boldsymbol{a}$} at 1190 175
        \pinlabel{$\boldsymbol{b}$} at 1190 485
        \pinlabel{{\color{black} $w_0$}} at 1100 605
        \pinlabel{{\color{Blue} $w_1$}} at 1340 605
        \pinlabel{(3)} at 470 300
        \pinlabel{(4)} at 1480 300
        \tiny
        \pinlabel{$\boldsymbol{s_0}$} at 1335 333
        \pinlabel{$\boldsymbol{s_1}$} at 1122 333
        \small
        \pinlabel{{\color{red} $\boldsymbol{a}$}} at 1695 175
        \pinlabel{{\color{red} $\boldsymbol{b}$}} at 1695 485
        \pinlabel{{\color{black} $w_0$}} at 1600 605
        \pinlabel{{\color{black} $w_1$}} at 1860 605
        \tiny
        \pinlabel{$\boldsymbol{s_0}$} at 1840 333
        \pinlabel{$\boldsymbol{s_1}$} at 1627 333
        \endlabellist
        \includegraphics[width=\textwidth]{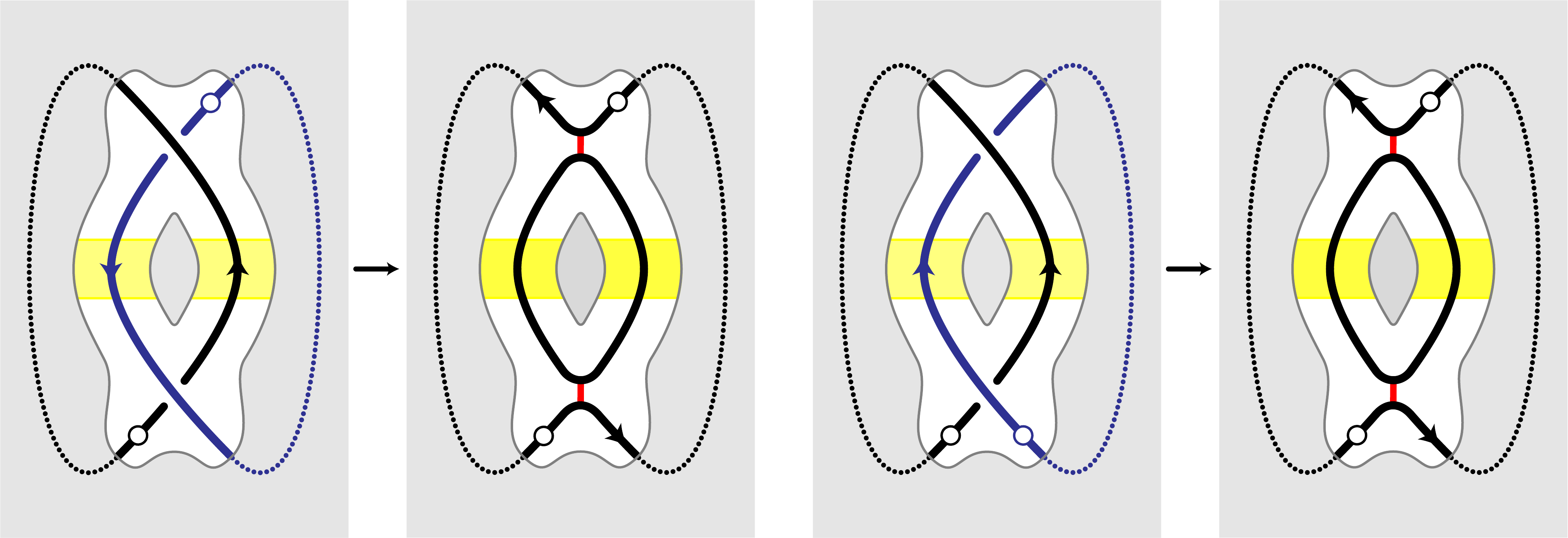}
        \caption{Smoothing a bigon formed by strands of distinct components of a link}
        \label{Fi:Smoothing Knot bigon 2}
    \end{center}
\end{figure}

Note that smoothing a bigon can change the number of components in a Gauss code. 
 In case (1), this number increases by one, whereas in cases (3) and (4) it decreases by one.

\subsection{Detecting and smoothing a triangle in a Gauss code} \label{S:Detect}
A triangle is present in a Gauss code when three crossings $\boldsymbol{a}$, $\boldsymbol{b}$, and $\boldsymbol{c}$ appear effectively consecutively in each of their three (unordered) pairs, $\{\boldsymbol{a, b}\}$, $\{\boldsymbol{b, c}\}$, and $\{\boldsymbol{a, c}\}$. Because there are now three crossings and three strands involved, there are many different ways that everything can be connected, and there are many different formats for what a triangle could look like in a Gauss code. Before getting into the details of the possible formats, we illustrate the main ideas through a concrete example, which will be a running example throughout \textsection\ref{S:Detect}.

\begin{figure}[H]
    \begin{center}
        \labellist
        \small \hair 4pt
        \pinlabel{$p_1$} at 145 170
        \pinlabel{1} at 135 230
        \pinlabel{2} at 180 310
        \pinlabel{3} at 320 150
        \pinlabel{4} at 235 115
        \pinlabel{5} at 115 100
        \pinlabel{6} at 20 180
        \pinlabel{7} at 220 235
        \pinlabel{8} at 155 20
        \endlabellist
        \includegraphics[height=.35\textwidth]{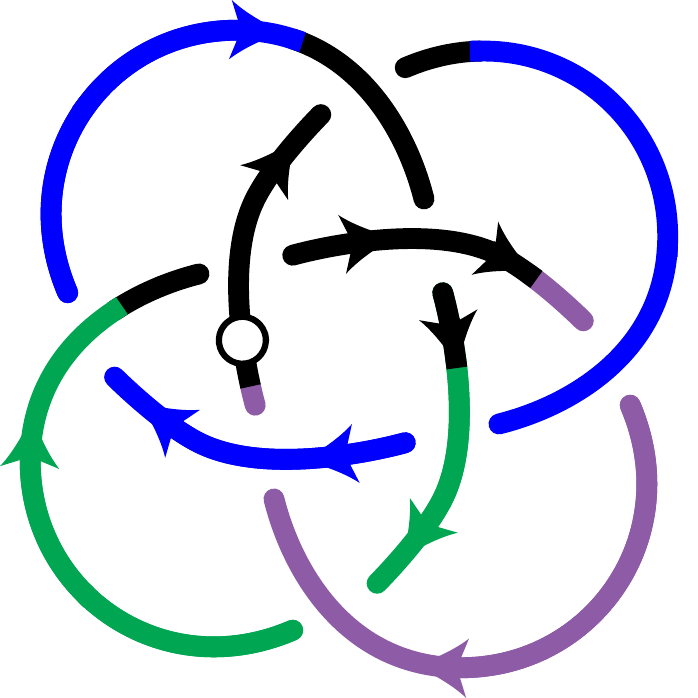}\hspace{.075\textwidth}
        \labellist
        \small \hair 4pt
        \pinlabel{1} at 135 205
        \pinlabel{2} at 530 205
        \pinlabel{7} at 335 535
        \pinlabel{{\color{blue} $w_0$ }} at 345 680
        \pinlabel{{\color{Purple} $w_1$}} at 590 40
        \pinlabel{{\color{Green} $w_2$}} at 70 40
        \tiny
        \pinlabel{$\boldsymbol{s_0}$} at 237 368
        \pinlabel{$\boldsymbol{s_1}$} at 335 205
        \pinlabel{$\boldsymbol{s_2}$} at 432 368
        \endlabellist
        \includegraphics[height=.4\textwidth]{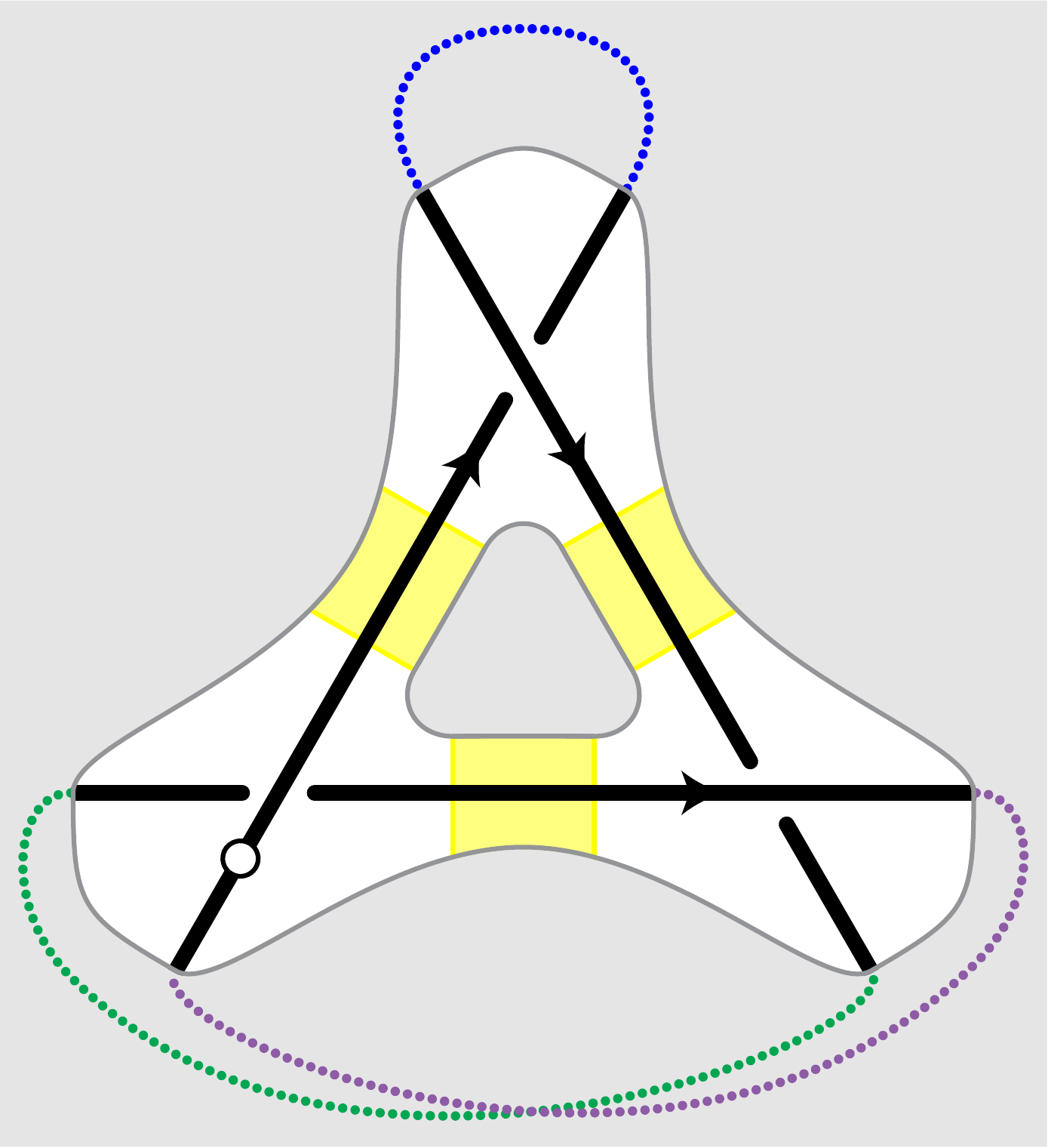}
        \caption{A diagram of the knot $8_{18}$ with Gauss code  $[[1, 2, 3, 4, 5, 6, 2, 7, 4, 8, 6, 1, 7, 3, 8, 5]]$ and a triangle formed by crossings 1, 2, and 7. Also see \eqref{E:Triangle1}, \eqref{E:triangle2},  \eqref{E:Triangle3}, and Figures \ref{Fi:8_18 Triangle Smoothing}-\ref{Fi:8_18 Anti-Triangle Smoothing}.}
        \label{Fi:8_18}
    \end{center}
\end{figure}

\begin{example}\label{Ex:8_18}
    Consider the diagram of the $8_{18}$ knot shown left in Figure \ref{Fi:8_18}. Its Gauss code is $G=[[\boldsymbol{1}, \boldsymbol{2}, 3, 4, 5, 6, \boldsymbol{2, 7}, 4, 8, 6, \boldsymbol{1, 7}, 3, 8, 5]]$. Thus, crossings $\boldsymbol{1}$, $\boldsymbol{2}$, and $\boldsymbol{7}$ form a triangle, because we see $\boldsymbol{1,2}$ and $\boldsymbol{2,7}$ and $\boldsymbol{1,7}$ consecutively in $G$.  
Figure \ref{Fi:8_18}, right, zooms in on the triangle itself and connects the three arbitrary strands based on how we know they connect in $8_{18}$.  The format here is 
\begin{equation}\label{E:Triangle1}
    G=[[\boldsymbol{a}, \boldsymbol{s_0}, \boldsymbol{b}, w_0, \boldsymbol{b}, \boldsymbol{s_1}, \boldsymbol{c}, w_1, \boldsymbol{a}, \boldsymbol{s_2}, \boldsymbol{c}, w_2]],
\end{equation}
where, as in \textsection\textsection\ref{S:1gon}-\ref{S:Bigon}, each $\boldsymbol{s_i}$ is a segment of $G$ that includes only smoothed crossings and each $w_i$ is an arbitrary segment or word within $G$. In this particular example, $\boldsymbol{a}=\boldsymbol{1}$, $\boldsymbol{b}=\boldsymbol{2}$, $\boldsymbol{c}=7$, each $\boldsymbol{s_i}$ is empty, $w_0=[3,4,5,6]$, $w_1=[4,8,6]$, and $w_2=[3,8,5]$. 
\end{example}

In other examples, however, the crossings could appear in different orders or in different components. Still, up to permutation, there are three distinct {\it types} of formats:
\begin{equation}\label{E:triangle2}
\begin{split}
    G&=[[\boldsymbol{x_{00}}, \boldsymbol{s_0}, \boldsymbol{x_{01}}, w_0, \boldsymbol{x_{10}}, \boldsymbol{s_1},\boldsymbol{x_{11}}, w_1, \boldsymbol{x_{20}}, \boldsymbol{s_2}, \boldsymbol{x_{21}}, w_2],G_2,\dots,G_n],\\
G&=[[\boldsymbol{x_{00}}, \boldsymbol{s_0}, \boldsymbol{x_{01}}, w_0, \boldsymbol{x_{10}}, \boldsymbol{s_1},\boldsymbol{x_{11}}, w_1],[ \boldsymbol{x_{20}}, \boldsymbol{s_2}, \boldsymbol{x_{21}}, w_2],G_2,\dots,G_n],\text{ and}\\
G&=[[\boldsymbol{x_{00}}, \boldsymbol{s_0}, \boldsymbol{x_{01}}, w_0],[ \boldsymbol{x_{10}}, \boldsymbol{s_1},\boldsymbol{x_{11}}, w_1],[ \boldsymbol{x_{20}}, \boldsymbol{s_2}, \boldsymbol{x_{21}}, w_2],G_2,\dots,G_n],
\end{split}
\end{equation}
where each $\boldsymbol{x_{ij}}$ is an appearance of one of the three crossings in question and $\boldsymbol{x_{i0}}\neq \boldsymbol{x_{i1}}$ for all $i=0,1,2$.  For clarity, we compare the versions of the notation presented in \eqref{E:Triangle1}-\eqref{E:triangle2}, as they pertain to Example \ref{Ex:8_18}:

\begin{equation}\label{E:Triangle3}
G=\left[\left[~\overbrace{\underbrace{1}_{\boldsymbol{a}}}^{\boldsymbol{x_{00}}}~,~
\overbrace{\underbrace{\color{white}~7~\color{black}}}_{\boldsymbol{s_0}}^{\boldsymbol{s_0}} ~\overbrace{\underbrace{2}_{\boldsymbol{b}}}^{\boldsymbol{x_{01}}}~,~ \overbrace{\underbrace{3, 4, 5, 6}_{w_0}}^{w_0}~,~ \overbrace{\underbrace{2}_{\boldsymbol{b}}}^{\boldsymbol{x_{10}}}~,~ \overbrace{\underbrace{\color{white}~7~\color{black}}_{\boldsymbol{s_1}}}^{\boldsymbol{s_1}} ~\overbrace{\underbrace{7}_{\boldsymbol{c}}}^{\boldsymbol{x_{11}}}~,~ \overbrace{\underbrace{4, 8, 6}_{w_1}}^{w_1}~,~ \overbrace{\underbrace{1}_{\boldsymbol{a}}}^{\boldsymbol{x_{20}}}~,~ \overbrace{\underbrace{\color{white}~7~\color{black}}_{\boldsymbol{s_2}}}^{\boldsymbol{s_2}} ~\overbrace{\underbrace{7}_{\boldsymbol{c}}}^{\boldsymbol{x_{21}}}~,~ \overbrace{\underbrace{3, 8, 5}_{w_2}}^{w_2}~\right]\right]
\end{equation}

In fact (we will not need this; we state it only for clarity), up to permutation and relabeling, there is really only one format of the third type, 
\[
[[\boldsymbol{a}, \boldsymbol{s_0}, \boldsymbol{b}, w_0],[ \boldsymbol{b}, \boldsymbol{s_1},\boldsymbol{c}, w_1],[ \boldsymbol{c}, \boldsymbol{s_2}, \boldsymbol{a}, w_2],G_4,\dots,G_n],
\]
two of the second type, 
\begin{align*}
&[[\boldsymbol{a}, \boldsymbol{s_0}, \boldsymbol{b}, w_0, \boldsymbol{a}, \boldsymbol{s_1},\boldsymbol{c}, w_1],[ \boldsymbol{b}, \boldsymbol{s_2}, \boldsymbol{c}, w_2],G_3,\dots,G_n] \text{ and }\\
&[[\boldsymbol{a}, \boldsymbol{s_0}, \boldsymbol{b}, w_0, \boldsymbol{c}, \boldsymbol{s_1},\boldsymbol{a}, w_1],[ \boldsymbol{b}, \boldsymbol{s_2}, \boldsymbol{c}, w_2],G_3,\dots,G_n],
\end{align*}
and four of the first type,
\begin{align*}
    &[[\boldsymbol{a}, \boldsymbol{s_0}, \boldsymbol{b}, w_0, \boldsymbol{b}, \boldsymbol{s_1}, \boldsymbol{c}, w_1, \boldsymbol{c}, \boldsymbol{s_2}, \boldsymbol{a}, w_2],G_2,\dots,G_n],\\
    &[[\boldsymbol{a}, \boldsymbol{s_0}, \boldsymbol{b}, w_0, \boldsymbol{c}, \boldsymbol{s_1}, \boldsymbol{a}, w_1, \boldsymbol{b}, \boldsymbol{s_2}, \boldsymbol{c}, w_2],G_2,\dots,G_n],\\
    &[[\boldsymbol{a}, \boldsymbol{s_0}, \boldsymbol{b}, w_0, \boldsymbol{b}, \boldsymbol{s_1}, \boldsymbol{c}, w_1, \boldsymbol{a}, \boldsymbol{s_2}, \boldsymbol{c}, w_2],G_2,\dots,G_n],\text{ and}\\
    &[[\boldsymbol{a}, \boldsymbol{s_0}, \boldsymbol{b}, w_0, \boldsymbol{c}, \boldsymbol{s_1}, \boldsymbol{a}, w_1, \boldsymbol{c}, \boldsymbol{s_2}, \boldsymbol{b}, w_2],G_2,\dots,G_n].
\end{align*}

The important thing to notice about each of these formats is that each pertinent component of the Gauss code follows the pattern (crossing, $\boldsymbol{s_i}$, crossing, $w_i$), possibly repeated.

\begin{figure}[H]
    \begin{center}
    \labellist
    \pinlabel{triangle smoothing} [l] at 100 -30
    \pinlabel{anti-triangle smoothing} [l] at 1450 -30
    \endlabellist
        \includegraphics[width=.3\textwidth]{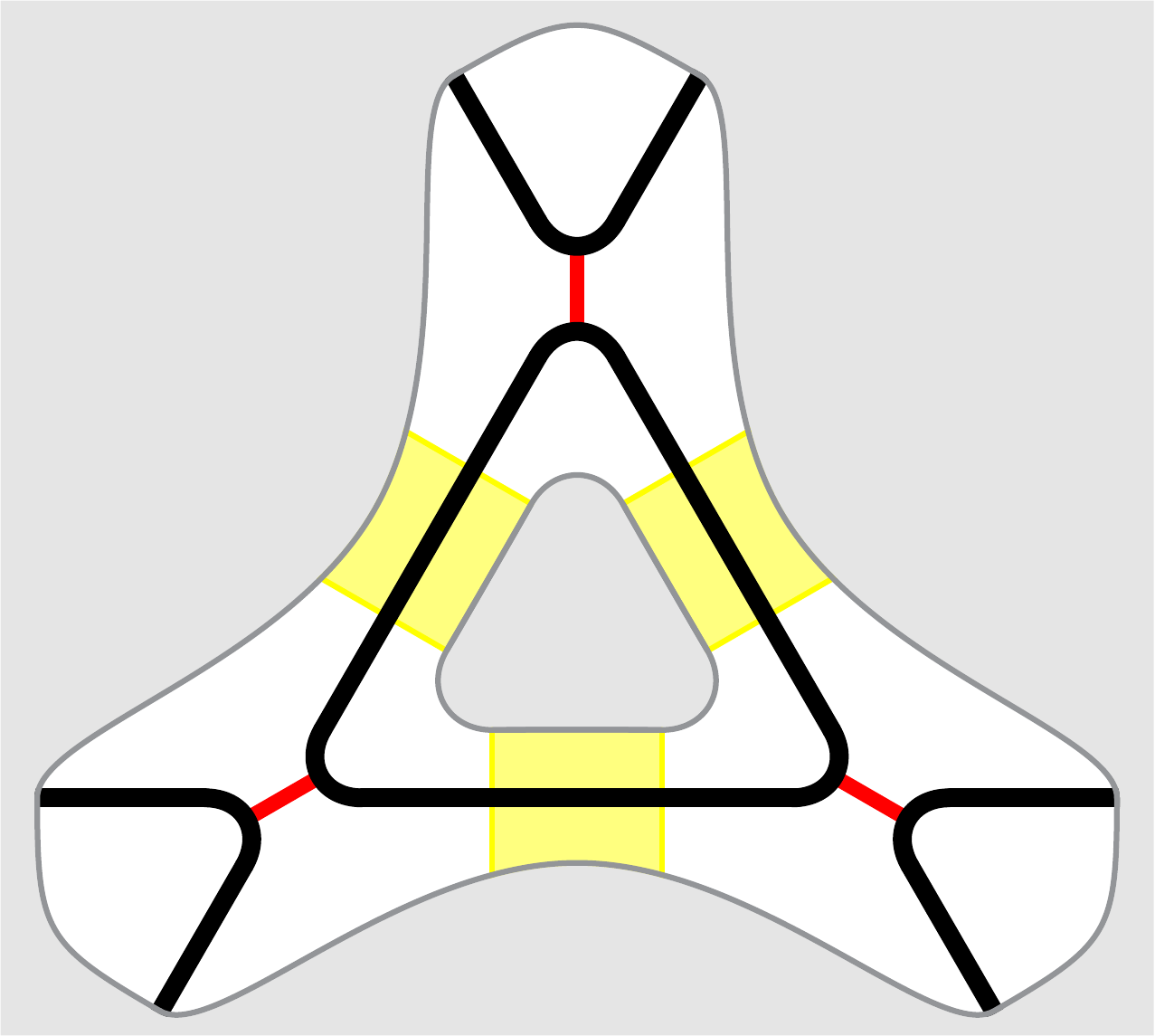}
        \raisebox{.75in}{$\leftarrow$}
        \includegraphics[width=.3\textwidth]{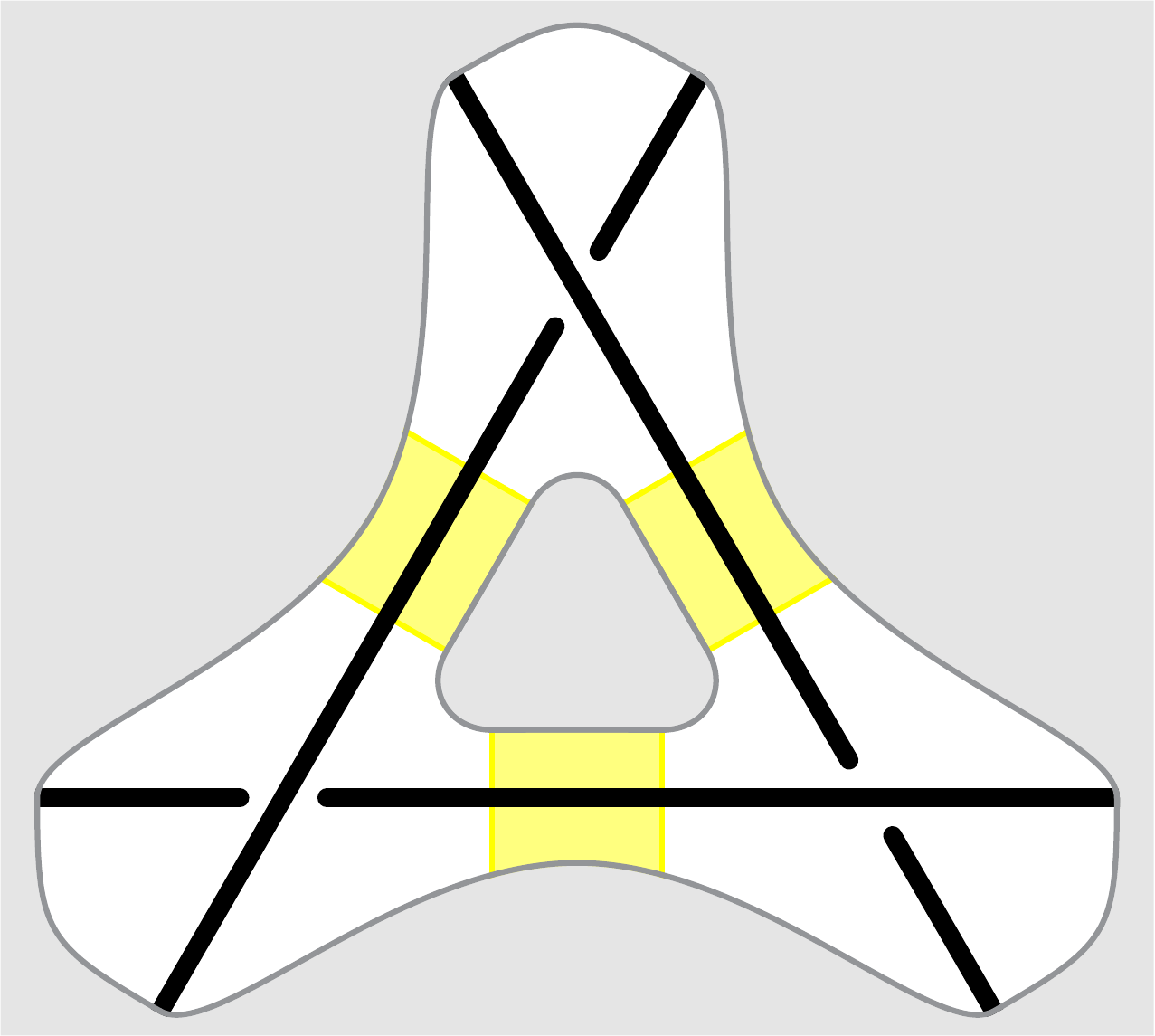}
        \raisebox{.75in}{$\rightarrow$}
        \includegraphics[width=.3\textwidth]{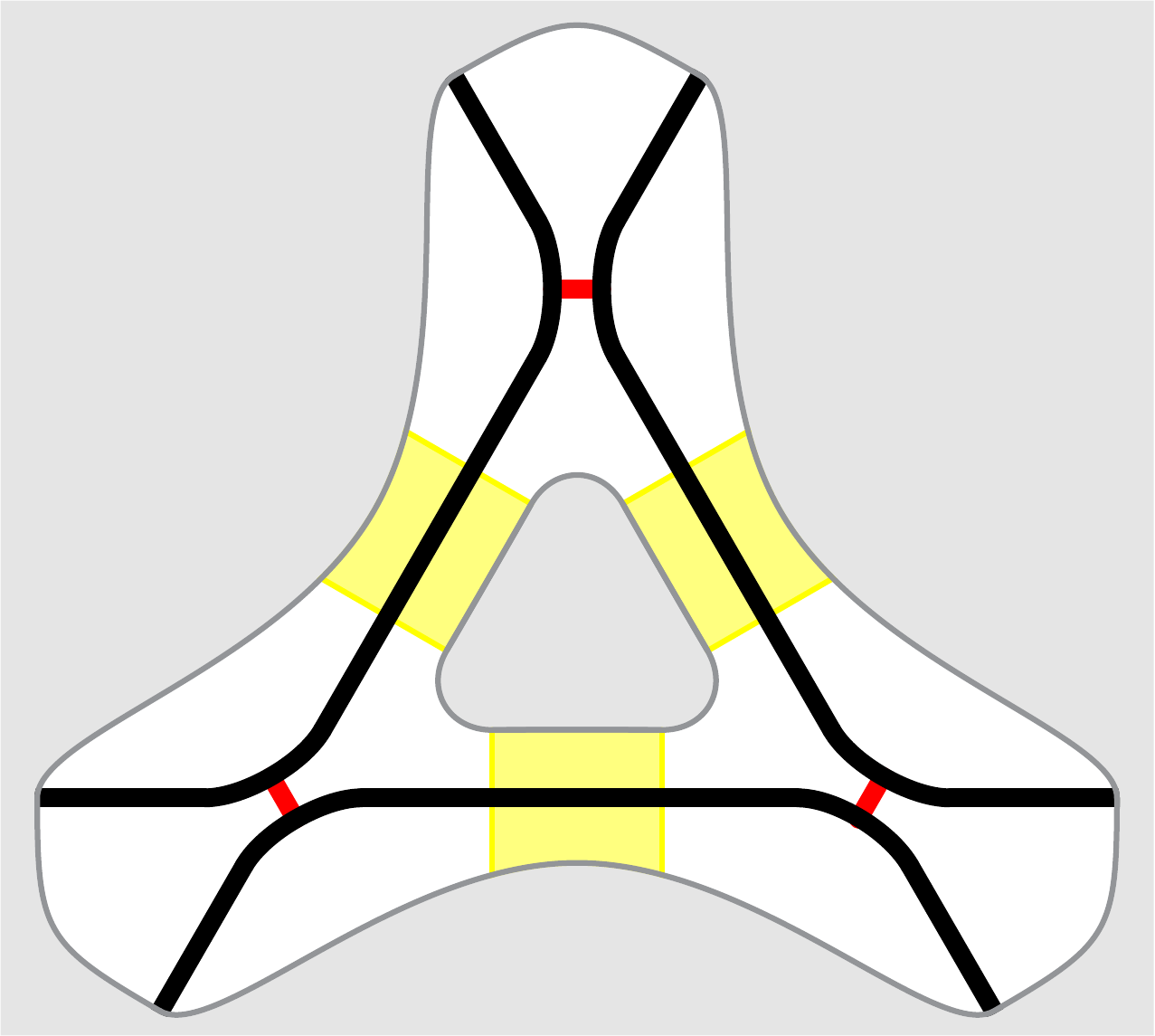}
        \caption{An arbitrary triangle (center) and its triangle and anti-triangle smoothings}
        \label{Fi:Triangle and Anti-triangle Smoothings}
    \end{center}
\end{figure}

Figure \ref{Fi:Triangle and Anti-triangle Smoothings} demonstrates what triangle and anti-triangle smoothings look like in general, and Figures \ref{Fi:8_18 Triangle Smoothing} and \ref{Fi:8_18 Anti-Triangle Smoothing} show how they look in Example \ref{Ex:8_18}. 

\begin{figure}[H]
    \begin{center}
        \labellist
        \small \hair 4pt
        \pinlabel{$p_1$} at 145 170
        \pinlabel{{\color{blue} $p_2$ }} at 70 280
        \pinlabel{1} at 127 203
        \pinlabel{2} at 195 288
        \pinlabel{3} at 320 150
        \pinlabel{4} at 235 115
        \pinlabel{5} at 115 100
        \pinlabel{6} at 20 180
        \pinlabel{7} at 220 230
        \pinlabel{8} at 155 20
        \endlabellist
        \includegraphics[height=.35\textwidth]{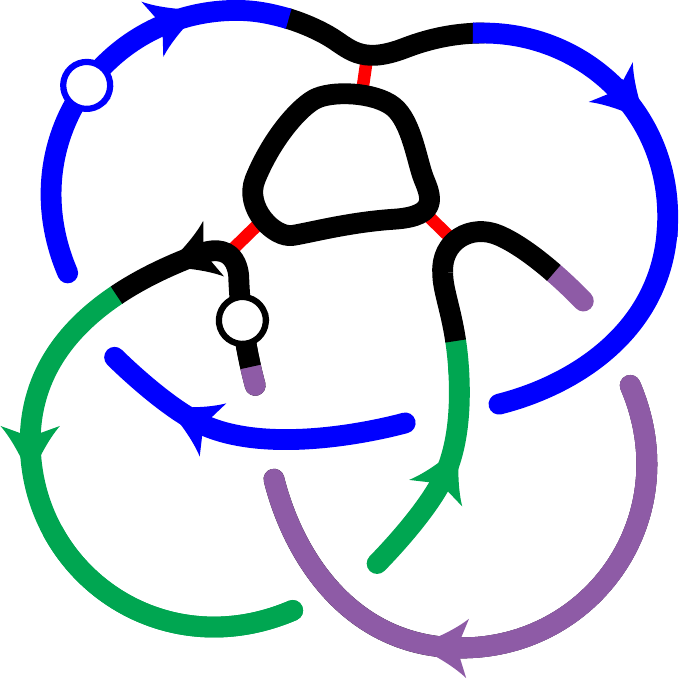}\hspace{.075\textwidth}
        \labellist
        \small \hair 4pt
        \pinlabel{1} at 135 205
        \pinlabel{2} at 530 205
        \pinlabel{7} at 335 540
        \pinlabel{{\color{blue} $w_0$} \color{black}} at 345 680
        \pinlabel{{\color{Purple} $w_1$}} at 590 40
        \pinlabel{{\color{Green} $w_2$}} at 70 40
        \tiny
        \pinlabel{$\boldsymbol{s_0}$} at 237 368
        \pinlabel{$\boldsymbol{s_1}$} at 335 205
        \pinlabel{$\boldsymbol{s_2}$} at 432 368
        \endlabellist
        \includegraphics[height=.4\textwidth]{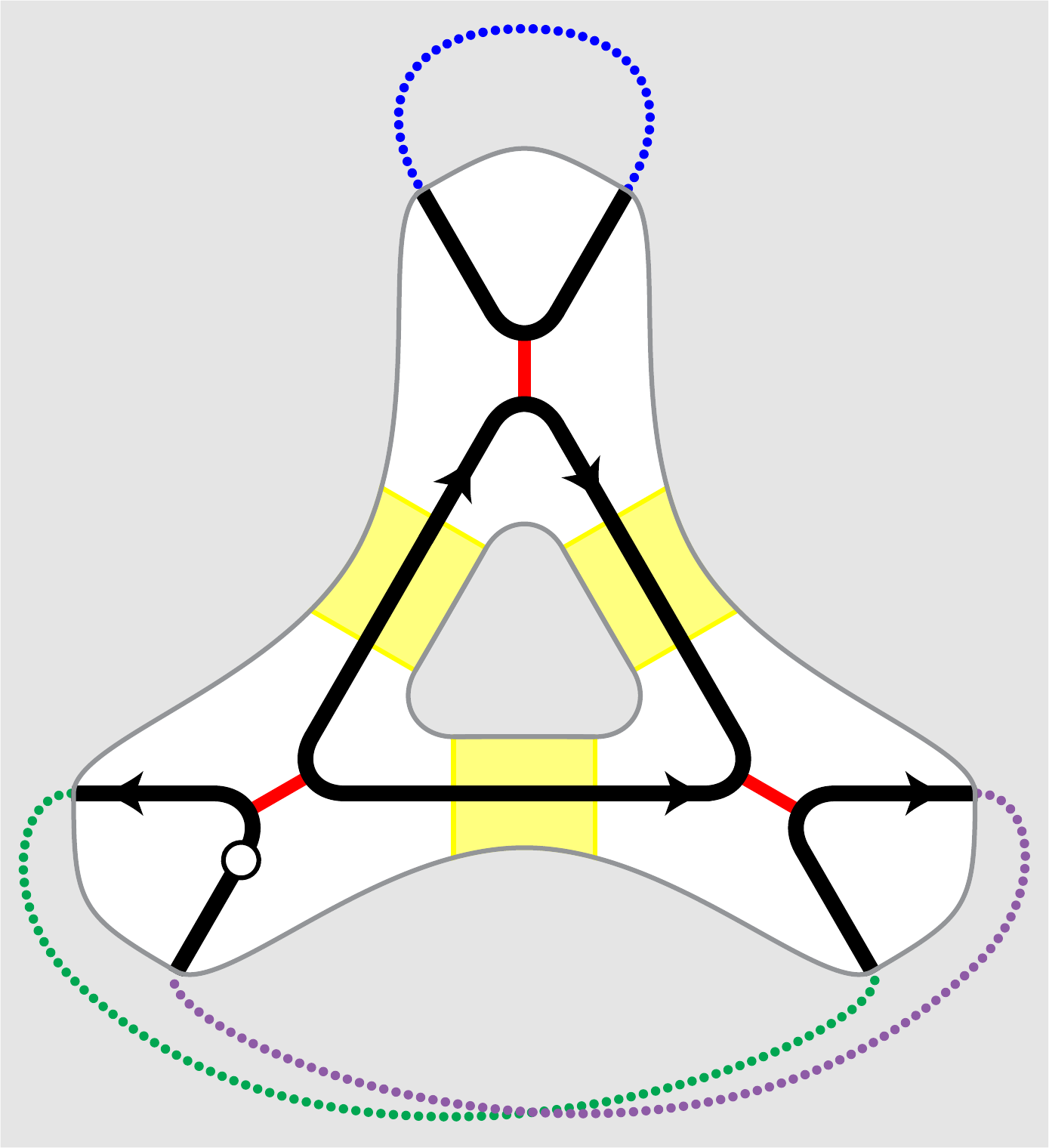}
        \caption{ The triangle smoothing for Example \ref{Ex:8_18} (also see Figure \ref{Fi:8_18})}
        \label{Fi:8_18 Triangle Smoothing}
    \end{center}
\end{figure}

\subsubsection{Triangle smoothing.}
A {\it triangle smoothing} involves smoothing the triangle so that it becomes a state circle. The new state circle is formed by ``gluing'' the three segments $\boldsymbol{x_{i0},s_i,x_{i1}}$, $i=0,1,2$, at the three crossings.  Continuing Example \ref{Ex:8_18}, we glue the segments $\boldsymbol{1,2}$ and $\boldsymbol{2,7}$ and $\boldsymbol{1,7}$ at crossings $\boldsymbol{1}$, $\boldsymbol{2}$, and $\boldsymbol{7}$; the new state circle is $(\boldsymbol{1,2,7})$. We add all three crossings to our list of resolved crossings.  

We determine the Gauss code for a triangle smoothing as follows, using ``triangle pairings,'' each of which consists of a segment $w_i$ and the crossings right before and after it (cyclically) in the Gauss code.  For example, the triangle pairings in Example \ref{Ex:8_18} are $[\boldsymbol{b},w_0,\boldsymbol{b}]$, $[\boldsymbol{c},w_1,\boldsymbol{a}]$, and $[\boldsymbol{c},w_2,\boldsymbol{a}]$, where again $\boldsymbol{a}=\boldsymbol{1}$, $\boldsymbol{b}=\boldsymbol{2}$, $\boldsymbol{c}=\boldsymbol{7}$, $w_0=[3,4,5,6]$, $w_1=[4,8,6]$, and $w_2=[3,8,5]$. After writing down the three triangle pairings for a given triangle, we perform a triangle smoothing as follows.

\begin{enumerate}
    \item Start with the word $w_i$ in any unprocessed triangle pairing. 
    \item If the crossing $\boldsymbol{x_{j0}}$ after $w_i$ has not yet been written down, write $w_i$ and then $\boldsymbol{x_{j0}}$; otherwise, write $\overline{w_i}$ and then the crossing $\boldsymbol{x_{i1}}$ that immediately precedes $w_i$.  Mark this triangle pairing as processed. 
    \item Jump to the second occurrence of the crossing last written. 
    \item If the second occurrence of the crossing last written is in an unprocessed triangle pairing, repeat from Step (2). 
    \item If the second occurrence of the crossing last written is in a processed triangle pairing, close the component. If there are unprocessed triangle pairings left, start a new component and repeat from Step (1). If there are no unprocessed triangle pairings left, close the component because the smoothed Gauss code is done. 
\end{enumerate}

If we take the triangle pairings of \((\boldsymbol{2}, w_0, \boldsymbol{2})\), \((\boldsymbol{7}, w_1, \boldsymbol{1})\), and \((\boldsymbol{7}, w_2, \boldsymbol{1})\) from Example \ref{Ex:8_18}, we begin by starting with any unprocessed triangle pair. They are all unprocessed at this point, so let us start with the first one. 

Write down the word and the crossing after it from the pairing \((\boldsymbol{2}, w_0, \boldsymbol{2})\), mark this pairing as processed, jump to the second occurrence of $\boldsymbol{2}$, and, since it is in the triangle pairing that we just processed, close this component:

\[
[w_0, \boldsymbol{2}\to[w_0,\boldsymbol{2}].
\]

There are two unprocessed triangle pairings left, so start a new component. Choose the triangle pairing \((\boldsymbol{7}, w_1, \boldsymbol{1})\). Write down the word and the crossing after it, mark this pairing as processed, and jump to the second occurrence of $\boldsymbol{1}$, which is in the unprocessed triangle pairing \((\boldsymbol{7}, w_2, \boldsymbol{1})\). Since the crossing we landed on is at the end of the pairing, write the word in reverse and the crossing before it, then mark this pairing as processed, and jump to the second occurrence of $\boldsymbol{7}$. It is in the processed triangle pairing of \((\boldsymbol{7}, w_1, \boldsymbol{1})\) so close the component:

\[
[w_1, \boldsymbol{1}\quad\to\quad [w_1, \boldsymbol{1}, \overline{w_2}, \boldsymbol{7}\quad\to\quad [w_1, \boldsymbol{1}, \overline{w_2}, \boldsymbol{7}].
\]

Since no unprocessed triangle pairings remain, we are done. The format of the (changed part of the) final smoothed Gauss code is

\[
[[w_0, \boldsymbol{2}], [w_1, \boldsymbol{1}, \overline{w_2}, \boldsymbol{7}]],
\]
which agrees with Figure \ref{Fi:Triangle and Anti-triangle Smoothings}. Substituting for the $w_i$ and copying down any components of the previous Gauss code not involved with the triangle gives the new Gauss code.  In this example, 
this gives
\[
[[\boldsymbol{1},6,8,4,\boldsymbol{7},3,8,5],[\boldsymbol{2},3,4,5,6]],
\]
and so the resulting Gauss-state code is
\[
[[[\boldsymbol{1},6,8,4,\boldsymbol{7},3,8,5],[\boldsymbol{2},3,4,5,6]],[(\boldsymbol{1},\boldsymbol{2},\boldsymbol{7})],[\boldsymbol{1},\boldsymbol{2},\boldsymbol{7}]]
\]

\subsubsection{Anti-triangle smoothing.} An {\it anti-triangle smoothing} is the opposite of an triangle smoothing in the sense that it involves reversing all of the smoothings. 
This is the process of smoothing the anti-triangle:

\begin{enumerate}
    \item If all crossings have been written twice and all words and smoothed crossings have been written once, you are done. If this is not the case, start a new component and continue to Step (2) with a crossing 
    whose adjacent segment $\boldsymbol{s_i}$ of smoothed crossings has not been written. 
    \item Write down this crossing followed by the adjacent segment $\boldsymbol{s_i}$ of smoothed crossings; if $\boldsymbol{s_i}$ appears (cyclically) before the crossing, write down $\boldsymbol{\overline{s_i}}$, rather than $\boldsymbol{s_i}$.
    \item Look at the crossing on the opposite side of $\boldsymbol{s_i}$ and jump to its other occurrence. 
    \item Write down the crossing followed by the word $w_j$ next to it. The word may be on either side of the crossing. If the word is to the left, write the word in reverse order.  
    \item Look at the crossing on the opposite side of $w_j$ and jump to its other occurrence. 
    If you have already written down 
    the segment $\boldsymbol{s_k}$ of smoothed crossings next to it, close the component and repeat from Step (1). If this is not the case, continue from Step (2).
\end{enumerate}

This gives the format for the Gauss code that results from the anti-triangle smoothing.  Get the actual Gauss code from the format by substituting for the various $\boldsymbol{s_i}$ and $w_j$.

The idea here is to write the pattern of (crossing, smoothed crossings $\boldsymbol{s_i}$, crossing, word $w_j$) and repeat. You may have to traverse the Gauss code in the reverse direction, and that is fine, just reverse $\boldsymbol{s_i}$ or $w_j$.  

\begin{figure}[H]
    \begin{center}
        \labellist
        \small \hair 4pt
        \pinlabel{$p_1$} at 145 170
        \pinlabel{1} at 107 195
        \pinlabel{2} at 177 305
        \pinlabel{3} at 320 150
        \pinlabel{4} at 235 115
        \pinlabel{5} at 115 100
        \pinlabel{6} at 20 180
        \pinlabel{7} at 222 205
        \pinlabel{8} at 155 20
        \endlabellist
        \includegraphics[height=.35\textwidth]{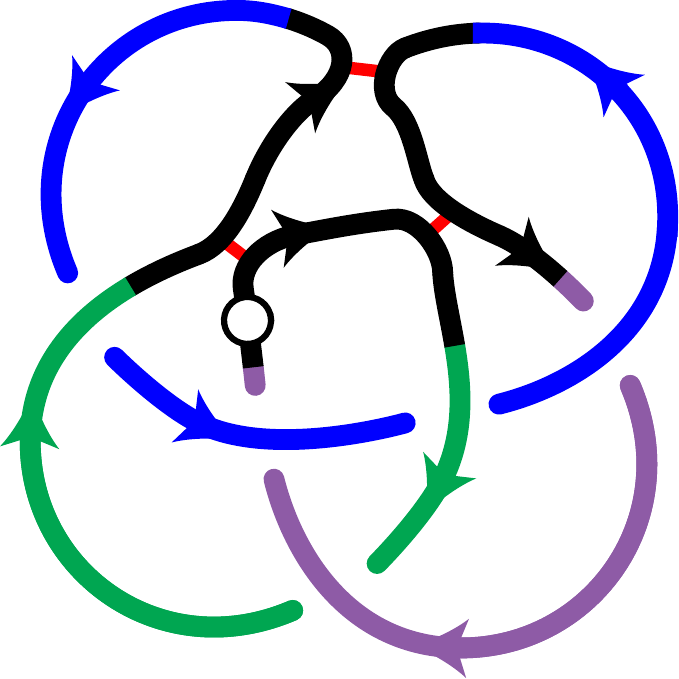}\hspace{.075\textwidth}
        \labellist
        \small \hair 4pt
        \pinlabel{1} at 135 205
        \pinlabel{2} at 530 205
        \pinlabel{7} at 335 535
        \pinlabel{{\color{blue} $w_0$}} at 345 680
        \pinlabel{{\color{Purple} $w_1$}} at 590 40
        \pinlabel{{\color{Green} $w_2$}} at 70 40
        \tiny
        \pinlabel{$\boldsymbol{s_0}$} at 237 368
        \pinlabel{$\boldsymbol{s_1}$} at 335 205
        \pinlabel{$\boldsymbol{s_2}$} at 432 368
        \endlabellist
        \includegraphics[height=.4\textwidth]{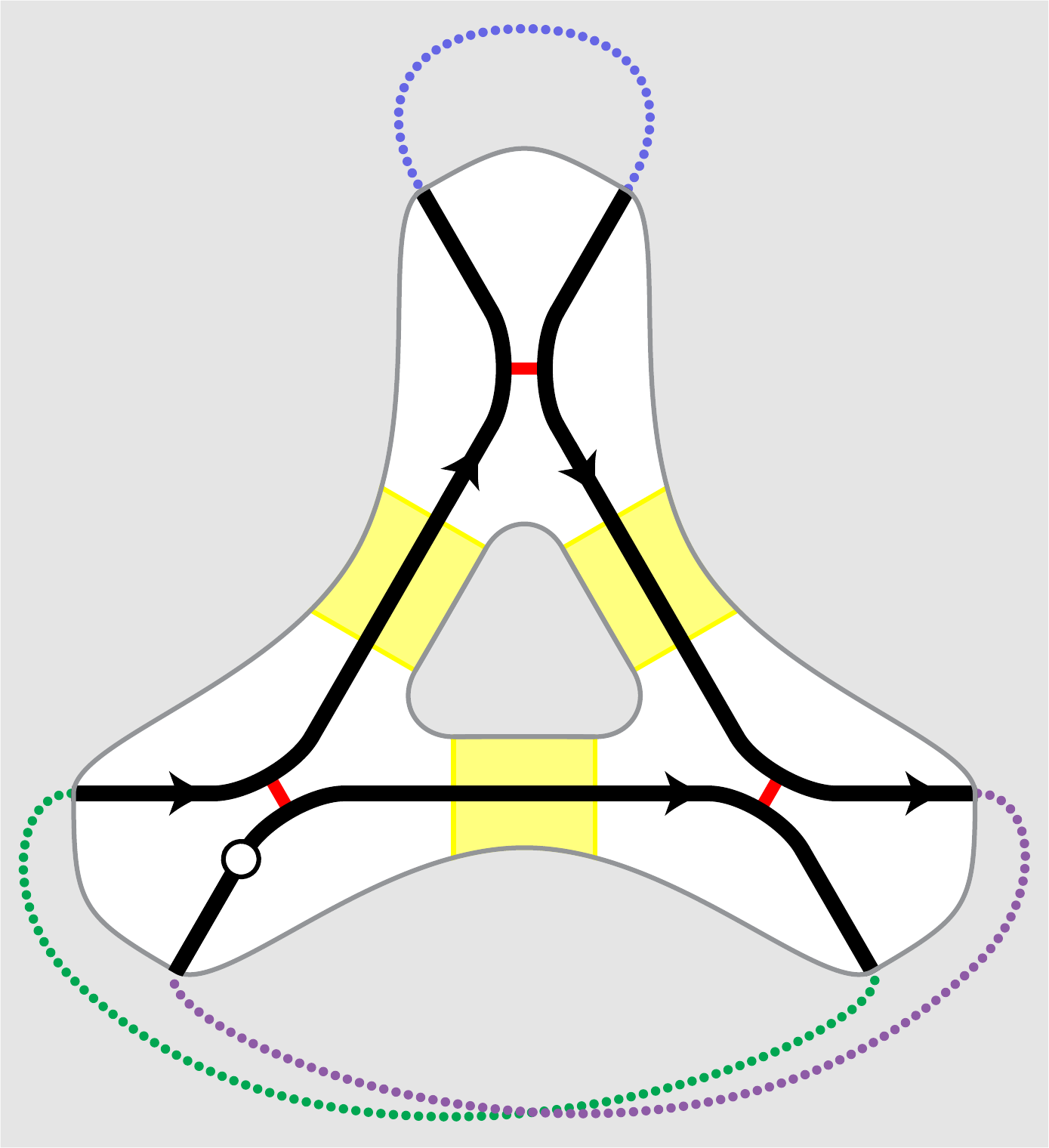}
        \caption{ The anti-triangle smoothing for Example \ref{Ex:8_18} (also see Figure \ref{Fi:8_18})}
        \label{Fi:8_18 Anti-Triangle Smoothing}
    \end{center}
\end{figure}

In Example \ref{Ex:8_18}, the process plays out like this, as we will explain below (also see Figure \ref{Fi:8_18 Anti-Triangle Smoothing}):
\begin{align*}
    &[[\boldsymbol{1}, \boldsymbol{s_0}\\
    \to& [[\boldsymbol{1}, \boldsymbol{s_0}, \boldsymbol{2}, \overline{w_0}\\
    \to& [[\boldsymbol{1}, \boldsymbol{s_0}, \boldsymbol{2}, \overline{w_0}, \boldsymbol{2}, \boldsymbol{s_1}\\
    \to& [[\boldsymbol{1}, \boldsymbol{s_0}, \boldsymbol{2}, \overline{w_0}, \boldsymbol{2}, \boldsymbol{s_1}, \boldsymbol{7}, w_2\\
    \to&[[\boldsymbol{1}, \boldsymbol{s_0}, \boldsymbol{2}, \overline{w_0}, \boldsymbol{2}, \boldsymbol{s_1}, \boldsymbol{7}, w_2, \boldsymbol{1}, \boldsymbol{s_2}\\
    \to&[[\boldsymbol{1}, \boldsymbol{s_0}, \boldsymbol{2}, \overline{w_0}, \boldsymbol{2}, \boldsymbol{s_1}, \boldsymbol{7}, w_2, \boldsymbol{1}, \boldsymbol{s_2}, \boldsymbol{7}, w_1\\
    \to&[[\boldsymbol{1}, \boldsymbol{s_0}, \boldsymbol{2}, \overline{w_0}, \boldsymbol{2}, \boldsymbol{s_1}, \boldsymbol{7}, w_2, \boldsymbol{1}, \boldsymbol{s_2}, \boldsymbol{7}, w_1]
\end{align*}
We begin by writing the first crossing, $\boldsymbol{1}$, and the adjacent smoothed crossings, $\boldsymbol{s_0}$. The next crossing is $\boldsymbol{2}$, so we jump to the other occurrence of $\boldsymbol{2}$ and write the word next to it, $w_0$, in this case, in reverse. The crossing on the other side of $w_0$ is the $\boldsymbol{2}$ before $w_0$, so we jump to the other occurrence of $\boldsymbol{2}$ (the one after $w_0$); since we have not yet written down the adjacent segment $\boldsymbol{s_1}$ of smoothed crossings, we write down $\boldsymbol{2}$ and $\boldsymbol{s_1}$. 
The next crossing is $\boldsymbol{7}$, so we jump to the other occurrence of $\boldsymbol{7}$ and write down $\boldsymbol{7}$ and the word next to it, $w_2$.
The crossing on the other side of $w_2$ is $\boldsymbol{1}$, so we jump to the other 
occurrence of $\boldsymbol{1}$ and consider the adjacent smoothed crossings $\boldsymbol{s_2}$; because $\boldsymbol{s_2}$ has not yet been written down, we write $\boldsymbol{1}$ and $\boldsymbol{s_2}$. The next crossing is $\boldsymbol{7}$, so we jump to the other 
occurrence of $\boldsymbol{7}$ and write down $\boldsymbol{7}$ and the adjacent word, $w_1$.
The next crossing is $\boldsymbol{1}$, so we jump to the other 
occurrence of $\boldsymbol{1}$ and consider the adjacent smoothed crossings $\boldsymbol{s_0}$. Since we have already written \([\boldsymbol{1}, \boldsymbol{s_0}, \boldsymbol{2}\), we close the component. All crossings have been written twice, and all words and smoothed crossings have been written once, so the format of the final smoothed Gauss code is
\[
[[\boldsymbol{1}, \boldsymbol{s_0}, \boldsymbol{2}, \overline{w_0}, \boldsymbol{2}, \boldsymbol{s_1}, \boldsymbol{7}, w_2, \boldsymbol{1}, \boldsymbol{s_2}, \boldsymbol{7}, w_1]].
\]
This agrees with Figure \ref{Fi:8_18 Anti-Triangle Smoothing}. Substituting for the $\boldsymbol{s_i}$ and $w_i$ gives the actual Gauss code that results from the anti-triangle smoothing in Example \ref{Ex:8_18}:

\[
[[\boldsymbol{1}, \boldsymbol{2}, 6,5,4,3, \boldsymbol{2}, \boldsymbol{7}, 3,8,5, \boldsymbol{1}, \boldsymbol{7}, 4,8,6]].
\]

\section{Results}\label{S:Results}

We developed a Python program that automates the minimal genus algorithm and computes the unoriented genus and crosscap number of an alternating knot or link from its Gauss code.  We then applied this program to all prime alternating knots through 19 crossings and all prime alternating links through 14 crossings. 

Next, 
we will describe how we automated Adams' and Kindred's minimal genus algorithm and computed unoriented genus and crosscap number. 
Then, 
we will summarize the data we obtained through our program. We will also describe alternative methods of performing these computations that we used to check the accuracy of our program.

\subsection{Automating the minimal genus algorithm}\label{S:Program}
We developed a Python program that automates the minimal genus algorithm.
The input to the program is a csv file of Gauss codes obtained from KnotInfo \cite{knotinfo}, LinkInfo \cite{linkinfo}, SnapPy \cite{snappy}, or Regina \cite{regina},\footnote{In the case of SnapPy and Regina, there was an intermediate step in which we converted DT codes taken directly from those repositories and converted them to Gauss codes.} and the output is a new csv file with the columns ``Knot" (or ``Link"), ``Gauss Code", ``State Code", ``Unoriented Genus", and ``Crosscap Number''. The columns ``Knot" (or ``Link") and ``Gauss Code" are populated with information from the input csv. For each knot, the program uses the given Gauss code to compute a state code for an optimal Kauffman state (which it records in the ``State Code'' column) and then determines the unoriented genus and crosscap number of the given knot or link, which it also records in the appropriate column. 

The program uses all detection and smoothing methods discussed in \textsection\ref{S:State codes}, along with the formulas and facts about unoriented genus and crosscap number from \textsection\ref{S:Background}. 
For each knot or link $K$ in the input csv, the program takes its given Gauss code $G=[G_1,\dots,G_n]$, and initiates a Gauss-state code $[G,[],[]]$. Then our program implements the following process.
 
 The following will not apply initially, but it may later as the program repeats this process: if all of the crossings in some component of $G$ have been smoothed, our program removes that component from $G$, adds it to the list of state circles, and restarts the process. 
 
 Next, our program first looks for a 1-gon. If it does find a 1-gon, it smooths it into a state circle, adds the smoothed crossing and new state circle to their respective lists, updates the Gauss code, and then restarts the process for this newer Gauss-state code. If a 1-gon is not detected, the program looks for a bigon. If a bigon is detected, it smooths it into a state circle, adds the smoothed crossings and new state circle to their respective lists, updates the Gauss code, and then restarts the process. If a bigon is not detected, the program looks for a triangle. When a triangle is detected, the program creates two copies of the current Gauss-state code and treats one via the triangle smoothing and the other via the anti-triangle smoothing. Then it separately  
 restarts the process with each newer Gauss-state code under consideration. 
 
The program repeats this process until every Gauss-state code is empty, at which point each list of state circles will be a state code.  From this list of state codes, the program chooses the longest one, i.e., the one with the most state circles.  It then computes the unoriented genus $\Gamma(K)=c+1-s$, where the crossing number $c$ equals $\frac{1}{2}\sum_i\text{length}(G_i)$, because each crossing appears exactly twice in $G$, and $s$ is the length of the longest state code, which equals the number of circles in the state $x$ that it describes.  

Finally, the program determines whether or not the associated state graph $G_x$ is simple and bipartite.  It is simple if and only if no pair of tuples in the state code for $x$ share more than one entry. It is bipartite if and only if the tuples in the state code for $x$ can be partitioned into two classes so that each crossing appears exactly once in each partition class. If $G_x$ is simple and bipartite, then $\gamma(K)=\Gamma(K)+1$, and if not then $\gamma(K)=\Gamma(K)$; this is due to Theorem \ref{T:CKLSV}.

We took several measures to bolster our confidence in the accuracy of our results.  First of all, the first and second authors independently coded everything and obtained the same results.  Secondly, for knots through 12 crossings, we checked that our results agree with the values listed on knotinfo.  Third, with knots with more crossings, we checked that our results agree with those obtained by a different method developed earlier by the second author \cite{tksplice}. That method, which employs splice-unknotting numbers, first introduced by Ito and Takimura \cite{it}, works for alternating knots but not for links of multiple components.  Fourth, we computed the crosscap numbers of all alternating knots and links through at least 12 crossings by a more exhaustive approach: find all adequate states and choose the shortest one, rather than employing the more efficient minimal genus algorithm.  The results obtained by all of these methods agree. 

Table \ref{T:10L} lists the crosscap numbers of all 10-crossing prime alternating links (with more than one component).  On LinkInfo, each such link is named L10a$k$ for some $k=1,\dots,174$; in the table, we denote that link by $10^a_k$. The vast majority of these links have unoriented genus equal to crosscap number, so we elect not to include separate columns for unoriented genus. Instead, the entry for $\gamma(L)$ is marked with a star in those rare cases where $\Gamma(L)=\gamma(L)-1$.  The rest have $\Gamma(L)=\gamma(L)$.

\begin{table}[H]
\begin{center}
\small
\begin{tabular}{|ll|ll|ll|ll|ll|ll|ll|ll|}
\hline\hline
L & $\gamma$ & L & $\gamma$ & L & $\gamma$ & L & $\gamma$ & L & $\gamma$ & L & $\gamma$ & L & $\gamma$ & L & $\gamma$ \\
\hline\hline
$10^a_{1}$ & 5 & $10^a_{2}$ & 4 & $10^a_{3}$ & 4 & $10^a_{4}$ & 4 & $10^a_{5}$ & 4 & $10^a_{6}$ & 5 & $10^a_{7}$ & 4 & $10^a_{8}$ & 4\\ \hline 
$10^a_{9}$ & 3 & $10^a_{10}$ & 4 & $10^a_{11}$ & 5 & $10^a_{12}$ & 5 & $10^a_{13}$ & 4 & $10^a_{14}$ & 5 & $10^a_{15}$ & 4 & $10^a_{16}$ & 3\\ \hline 
$10^a_{17}$ & 5 & $10^a_{18}$ & 4 & $10^a_{19}$ & 4 & $10^a_{20}$ & 4 & $10^a_{21}$ & 5 & $10^a_{22}$ & 4 & $10^a_{23}$ & 4 & $10^a_{24}$ & 4\\ \hline 
$10^a_{25}$ & 4 & $10^a_{26}$ & 4 & $10^a_{27}$ & 3 & $10^a_{28}$ & 4 & $10^a_{29}$ & 4 & $10^a_{30}$ & 5 & $10^a_{31}$ & 3 & $10^a_{32}$ & 4\\ \hline 
$10^a_{33}$ & 3 & $10^a_{34}$ & 5 & $10^a_{35}$ & 4 & $10^a_{36}$ & 4 & $10^a_{37}$ & 5 & $10^a_{38}$ & 5 & $10^a_{39}$ & 4 & $10^a_{40}$ & 4$^*$\\ \hline 
$10^a_{41}$ & 4 & $10^a_{42}$ & 5 & $10^a_{43}$ & 5 & $10^a_{44}$ & 3 & $10^a_{45}$ & 4 & $10^a_{46}$ & 3 & $10^a_{47}$ & 4 & $10^a_{48}$ & 3\\ \hline 
$10^a_{49}$ & 4 & $10^a_{50}$ & 4$^*$ & $10^a_{51}$ & 5 & $10^a_{52}$ & 4 & $10^a_{53}$ & 4 & $10^a_{54}$ & 4 & $10^a_{55}$ & 4 & $10^a_{56}$ & 5\\ \hline 
$10^a_{57}$ & 4 & $10^a_{58}$ & 4 & $10^a_{59}$ & 4 & $10^a_{60}$ & 4 & $10^a_{61}$ & 5 & $10^a_{62}$ & 3 & $10^a_{63}$ & 4 & $10^a_{64}$ & 4\\ \hline 
$10^a_{65}$ & 4 & $10^a_{66}$ & 3 & $10^a_{67}$ & 3 & $10^a_{68}$ & 4 & $10^a_{69}$ & 5 & $10^a_{70}$ & 4 & $10^a_{71}$ & 5 & $10^a_{72}$ & 4\\ \hline 
$10^a_{73}$ & 3 & $10^a_{74}$ & 3 & $10^a_{75}$ & 3 & $10^a_{76}$ & 5 & $10^a_{77}$ & 4 & $10^a_{78}$ & 4$^*$ & $10^a_{79}$ & 4 & $10^a_{80}$ & 3\\ \hline 
$10^a_{81}$ & 4 & $10^a_{82}$ & 5 & $10^a_{83}$ & 5 & $10^a_{84}$ & 4 & $10^a_{85}$ & 5 & $10^a_{86}$ & 5 & $10^a_{87}$ & 5 & $10^a_{88}$ & 4\\ \hline 
$10^a_{89}$ & 3 & $10^a_{90}$ & 4 & $10^a_{91}$ & 5 & $10^a_{92}$ & 4 & $10^a_{93}$ & 4 & $10^a_{94}$ & 4 & $10^a_{95}$ & 4 & $10^a_{96}$ & 4\\ \hline 
$10^a_{97}$ & 3 & $10^a_{98}$ & 3 & $10^a_{99}$ & 3 & $10^a_{100}$ & 3 & $10^a_{101}$ & 4$^*$ & $10^a_{102}$ & 3 & $10^a_{103}$ & 4 & $10^a_{104}$ & 5\\ \hline 
$10^a_{105}$ & 3 & $10^a_{106}$ & 4 & $10^a_{107}$ & 5 & $10^a_{108}$ & 5 & $10^a_{109}$ & 4 & $10^a_{110}$ & 3 & $10^a_{111}$ & 5 & $10^a_{112}$ & 4\\ \hline 
$10^a_{113}$ & 3 & $10^a_{114}$ & 2 & $10^a_{115}$ & 3 & $10^a_{116}$ & 4 & $10^a_{117}$ & 3 & $10^a_{118}$ & 2$^*$ & $10^a_{119}$ & 4$^*$ & $10^a_{120}$ & 2\\ \hline 
$10^a_{121}$ & 5 & $10^a_{122}$ & 4 & $10^a_{123}$ & 4 & $10^a_{124}$ & 5 & $10^a_{125}$ & 4 & $10^a_{126}$ & 4 & $10^a_{127}$ & 5 & $10^a_{128}$ & 4\\ \hline 
$10^a_{129}$ & 5 & $10^a_{130}$ & 4 & $10^a_{131}$ & 4 & $10^a_{132}$ & 4 & $10^a_{133}$ & 4 & $10^a_{134}$ & 4 & $10^a_{135}$ & 4 & $10^a_{136}$ & 5\\ \hline 
$10^a_{137}$ & 4 & $10^a_{138}$ & 3 & $10^a_{139}$ & 4 & $10^a_{140}$ & 4 & $10^a_{141}$ & 3 & $10^a_{142}$ & 3 & $10^a_{143}$ & 4 & $10^a_{144}$ & 3\\ \hline 
$10^a_{145}$ & 3$^*$ & $10^a_{146}$ & 4 & $10^a_{147}$ & 4 & $10^a_{148}$ & 4 & $10^a_{149}$ & 5 & $10^a_{150}$ & 4 & $10^a_{151}$ & 5 & $10^a_{152}$ & 4\\ \hline 
$10^a_{153}$ & 4 & $10^a_{154}$ & 4 & $10^a_{155}$ & 5$^*$ & $10^a_{156}$ & 5 & $10^a_{157}$ & 3 & $10^a_{158}$ & 4 & $10^a_{159}$ & 4 & $10^a_{160}$ & 3\\ \hline 
$10^a_{161}$ & 3$^*$ & $10^a_{162}$ & 4 & $10^a_{163}$ & 4 & $10^a_{164}$ & 4 & $10^a_{165}$ & 5 & $10^a_{166}$ & 4 & $10^a_{167}$ & 4$^*$ & $10^a_{168}$ & 5\\ \hline 
$10^a_{169}$ & 5 & $10^a_{170}$ & 4 & $10^a_{171}$ & 4 & $10^a_{172}$ & 4$^*$ & $10^a_{173}$ & 5 & $10^a_{174}$ & 5$^*$ &  &  &  &  \\
\hline\hline
\end{tabular}
\caption{The crosscap number $\gamma(L)$ of each prime alternating link $L$ with 10 crossings. Those marked with a star have unoriented genus $\Gamma(L)=\gamma(L)-1$; the rest have $\Gamma(L)=\gamma(L)$.
}\label{T:10L}
\end{center}
\end{table}

\subsection{Summary of data}

We computed the unoriented genus and crosscap number of every prime alternating knot through 19 crossings and every prime alternating link through 14 crossings; this data is new for knots with 14-19 crossings and for links with 10-14 crossings \cite{ak,tksplice}.  
In this final subsection, we present digested summaries of our data and examine intriguing patterns which emerge.  More data, along with our code, will be available at \href{https://github.com/IsaiasBahena/minimal-genus-state-surfaces}{this repository}.

First, Table \ref{T:UGKCC} lists the number of prime alternating knots with a given crossing number $c\leq 19$ that have a given unoriented genus $\Gamma$ or crosscap number $\gamma$. The table also shows the mean, mean, mode, and maximum values of $\Gamma$ and $\gamma$ at each crossing number $c\leq 19$.  
Table \ref{T:UGLCC} lists the same information as Table \ref{T:UGKCC}, but for prime alternating links with $c\leq 14$.
Tables \ref{T:CrosscapKStacked}-\ref{T:CrosscapLStacked} summarize the data from Tables \ref{T:UGKCC}-\ref{T:UGLCC} visually in stacked bar charts.

\begin{table}[H]
\begin{center}
    \small
    $\begin{array}{| cc |ccccccccc|cccc|}
\hline\hline
c&\gamma,\Gamma& 1 & 2 & 3 & 4 & 5 & 6 & 7 & 8 & 9 & \text{mean} & \text{median} & \text{mode} & \text{max} \\
\hline\hline
\multirow{2}{4pt}{3}  & \Gamma & 1 &  &  &  &  &  &  &  &  & 1 & 1 & 1 & 1 \\ 
& \gamma & 1 &  &  &  &  &  &  &  &  & 1 & 1 & 1 & 1 \\ \hline
\multirow{2}{4pt}{4}  & \Gamma &    &  1 &  &  &  &  &  &  &  & 2 & 2 & 2 & 2 \\ 
  & \gamma &  &  1 &  &  &  &  &  &  &  & 2 & 2 & 2 & 2 \\ \hline
\multirow{2}{4pt}{5}  & \Gamma & 1  &  1 &  &  &  &  &  &  &  & 1.5 & 1.5 & 1,2 & 2 \\ 
  & \gamma &1  &  1 &  &  &  &  &  &  &  & 1.5 & 1.5 & 1,2 & 2 \\ \hline
\multirow{2}{4pt}{6}  & \Gamma &    &  2  &  1 &  &  &  &  &  &  & 2.33 & 2 & 2 & 3 \\ 
  & \gamma &    &  2  &  1 &  &  &  &  &  &  & 2.33 & 2 & 2 & 3 \\ \hline
\multirow{2}{4pt}{7}  & \Gamma & 1  &  3  &  3 &  &  &  &  &  &  & 2.29 & 2 & 2,3 & 3 \\ 
  & \gamma & 1  &  2  &  4 &  &  &  &  &  &  & 2.43 & 3 & 3 & 3 \\ \hline
\multirow{2}{4pt}{8}  & \Gamma &    &  5  &  8  &  5 &  &  &  &  &  & 3 & 3 & 3 & 4 \\ 
  & \gamma &    &  4  &  9  &  5 &  &  &  &  &  & 3.06 & 3 & 3 & 4 \\ \hline
\multirow{2}{4pt}{9}  & \Gamma & 1  &  5  &  16  &  19 &  &  &  &  &  & 3.29 & 3 & 4 & 4 \\ 
  & \gamma &  1  &  3  &  18  &  19 &  &  &  &  &  & 3.34 & 3 & 4 & 4 \\ \hline
\multirow{2}{4pt}{10}  & \Gamma &    &  7  &  28  &  68  &  20 &  &  &  &  & 3.82 & 4 & 4 & 5 \\ 
  & \gamma  &    &  6  &  29  &  67  &  21 &  &  &  &  & 3.84 & 4 & 4 & 5 \\ \hline
\multirow{2}{4pt}{11}  & \Gamma & 1  &  7  &  45  &  155  &  159 &  &  &  &  & 4.26 & 4 & 5 & 5 \\ 
  & \gamma &  1  &  4  &  48  &  153  &  161 &  &  &  &  & 4.28 & 4 & 5 & 5 \\ \hline
\multirow{2}{4pt}{12}  & \Gamma &    &  11  &  69  &  371  &  622  &  215 &  &  &  & 4.75 & 5 & 5 & 6 \\ 
  & \gamma &     &  9  &  71  &  363  &  630  &  215 &  &  &  & 4.75 & 5 & 5 & 6 \\ \hline
\multirow{2}{4pt}{13}  & \Gamma & 1  &  9  &  102  &  689  &  2151  &  1926 &  &  &  & 5.21 & 5 & 5 & 6 \\ 
  & \gamma &  1  &  5  &  106  &  670  &  2170  &  1926 &  &  &  & 5.21 & 5 & 5 & 6 \\ \hline
\multirow{2}{4pt}{14}  & \Gamma &    &  14  &  141  &  1360  &  5466  &  9987  &  2568 &  &  & 5.69 & 6 & 6 & 7 \\
  & \gamma &     &  12  &  143  &  1318  &  5508  &  9984  &  2571 &  &  & 5.69 & 6 & 6 & 7 \\ \hline
\multirow{2}{4pt}{15}  & \Gamma & 1  &  12  &  202  &  2254  &  14075  &  38266  &  30453 &  &  & 6.13 & 6 & 6 & 7 \\ 
  &  \gamma  & 1  &  6  &  208  &  2169  &  14160  &  38241  &  30478 &  &  & 6.13 & 6 & 6 & 7 \\ \hline
\multirow{2}{4pt}{16}  & \Gamma &    &  19  &  258  &  4002  &  29432  &  121351  &  182086  &  42651 &  & 6.60 & 7 & 7 & 8 \\ 
& \gamma &     &  16  &  261  &  3852  &  29582  &  121219  &  182218  &  42651 &  & 6.60 & 7 & 7 & 8 \\ \hline
\multirow{2}{4pt}{17}  & \Gamma & 1  &  14  &  363  &  6170  &  64577  &  339932  &  805856  &  553066 &  & 7.04 & 7 & 7 & 8 \\ 
  & \gamma &  1  &  7  &  370  &  5892  &  64855  &  339346  &  806442  &  553066 &  & 7.04 & 7 & 7 & 8 \\ \hline
\multirow{2}{4pt}{18} & \Gamma &    &  23  &  436  &  10211  &  120190  &  862745  &  2885155  &  3772215  &  749310 & 7.49 & 8 & 8 & 9 \\ 
  & \gamma &     &  20  &  439  &  9776  &  120625  &  860676  &  2887224  &  3772173  &  749352 & 7.49 & 8 & 8 & 9 \\ \hline
\multirow{2}{4pt}{19}  & \Gamma  & 1  &  17  &  612  &  14909  &  237825  &  2027386  &  9108976  &  18308209  &  10921450 & 7.93 & 8 & 8 & 9 \\ 
  &  \gamma & 1  &  8  &  621  &  14167  &  238567  &  2020991  &  9115371  &  18307751  &  10921908 & 7.93 & 8 & 8 & 9 \\ \hline
\hline
    \end{array}$
    \caption{The number of prime alternating knots with a given crossing number $c\leq 19$ and unoriented genus $\Gamma$ or crosscap number $\gamma$}
    \label{T:UGKCC}
\end{center}
\end{table}

Recall from Proposition \ref{P:c2} that the maximum values of crosscap number and unoriented genus among prime alternating knots and links with a given crossing number $c\geq 3$ are both at most $\frac{c}{2}$, in fact at most $\lfloor \frac{c}{2} \rfloor$. Notice in Tables \ref{T:UGKCC}-\ref{T:UGLCC} that this bound is always achieved, with a single exception: unoriented genus for links with four crossings. Notice further that among all prime alternating knots with each crossing number $8\leq c\leq 19$, the median value (shared by $\gamma$ and $\Gamma$) is always one less than the maximum value of $\lfloor \frac{c}{2}\rfloor$, and when $c\geq 12$ this value of $\lfloor \frac{c}{2}\rfloor -1$ is also the (shared) mode.  We conjecture that this behavior persists.

\begin{conjecture}
For any $c\geq 12$, the medians and modes for $\gamma$ and $\Gamma$ among all $c$-crossing prime alternating knots equal $\lfloor\frac{c}{2}\rfloor-1$.
\end{conjecture}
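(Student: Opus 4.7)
The plan is to split the conjecture into its four assertions—that the mode and the median of each of $\Gamma$ and $\gamma$, taken over prime alternating $c$-crossing knots, equal $\lfloor c/2\rfloor-1$—and to handle them in parallel. By Proposition \ref{P:c2}, neither $\Gamma$ nor $\gamma$ exceeds $\lfloor c/2\rfloor$, and by Theorem \ref{T:CKLSV} they differ by at most $1$. The evidence in Table \ref{T:UGKCC} shows that $\Gamma=\gamma$ for an overwhelming majority of knots and that the distribution becomes steeply concentrated near the upper end as $c$ grows. My plan has two phases: a structural characterization of when a knot sits at the extreme value $\lfloor c/2\rfloor$ versus at $\lfloor c/2\rfloor-1$, followed by an enumerative comparison.

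For the structural step, I would fix a prime alternating $c$-crossing diagram $D$ of $K$ and apply Theorem \ref{T:AK} to equate $\Gamma(K)=c+1-s(D)$, where $s(D)$ denotes the maximum number of state circles among all Kauffman states of $D$. Thus $\Gamma(K)=\lfloor c/2\rfloor$ iff $s(D)=\lceil c/2\rceil+1$, while $\Gamma(K)=\lfloor c/2\rfloor-1$ iff $s(D)=\lceil c/2\rceil+2$. Using the Adams–Kindred minimal genus algorithm and the state code framework of \textsection\ref{S:State codes}, I would try to characterize the extremal diagrams—those with $s(D)=\lceil c/2\rceil+1$—as exactly the diagrams that, under every branch of the algorithm, never present a local configuration whose smoothing simultaneously spawns two fresh state circles. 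Concretely, these should be diagrams for which both checkerboard surfaces sit tight against the bound in Proposition \ref{P:c2}, and the goal is to translate that tightness into a finite list of forbidden local patterns in the Gauss code.

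Next, I would carry out an enumerative comparison at the level of flype-equivalence classes of prime alternating diagrams, invoking the Menasco–Thistlethwaite flyping theorem. Sundberg–Thistlethwaite–style asymptotics give a growth rate of roughly $\lambda^c$ for such classes, for an explicit $\lambda>0$. The forbidden-pattern characterization above constrains extremal diagrams to a proper subfamily, and one expects such a subfamily to have a strictly smaller growth constant, producing an inequality that shows the count at $\lfloor c/2\rfloor-1$ strictly exceeds the count at $\lfloor c/2\rfloor$. Combined with an analogous forbidden-pattern argument showing that the counts at values $\leq \lfloor c/2\rfloor-2$ are collectively smaller than the count at $\lfloor c/2\rfloor-1$, this yields both the mode statement (the count at $\lfloor c/2\rfloor-1$ dominates every other count) and the median statement (the cumulative counts on either side of $\lfloor c/2\rfloor-1$ are each at most half the total).

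The main obstacle will be the enumerative comparison: proving that forbidding a specific local pattern strictly reduces the exponential growth rate of prime alternating knot diagrams is typically very hard, because such asymptotic-enumeration arguments demand delicate transfer-matrix or generating-function machinery that is only loosely connected to the geometric content of the forbidden pattern. A reasonable first milestone, falling short of the full conjecture, would be the weaker limiting statement that the fraction of $c$-crossing prime alternating knots with $\gamma\in\{\lfloor c/2\rfloor-1,\lfloor c/2\rfloor\}$ tends to $1$ as $c\to\infty$; once this is in hand, the comparison of these two counts becomes the decisive and most delicate step.
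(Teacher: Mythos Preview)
The statement you are addressing is labeled a \emph{conjecture} in the paper, and the paper offers no proof of it; it is supported only by the computational evidence in Table~\ref{T:UGKCC}. So there is no ``paper's own proof'' against which to compare your attempt. What you have written is not a proof either---it is, as you yourself frame it, a research plan with an acknowledged central obstacle.

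That said, the plan has several concrete gaps beyond the one you flag. First, your structural step asserts that the extremal diagrams (those with $s(D)=\lceil c/2\rceil+1$) can be characterized by a \emph{finite} list of forbidden local patterns in the Gauss code; you give no argument for this, and it is far from obvious---the condition $s(D)=\lceil c/2\rceil+1$ is a global maximization over $2^c$ states, and there is no a priori reason it should reduce to finitely many local obstructions. Second, even granting a forbidden-pattern description and a strict drop in exponential growth rate, such asymptotics only yield the conclusion for all \emph{sufficiently large} $c$, whereas the conjecture asserts the claim for every $c\geq 12$; closing that gap would require an effective bound plus a finite computation, neither of which you outline. Third, the mode claim requires that the count at $\lfloor c/2\rfloor-1$ exceed the count at \emph{every} other value individually, not merely at $\lfloor c/2\rfloor$ and collectively at $\leq\lfloor c/2\rfloor-2$; your sketch does not address, for instance, why the count at $\lfloor c/2\rfloor-2$ should be smaller than that at $\lfloor c/2\rfloor-1$. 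In short, your proposal is a reasonable outline of where one might look, but it is several substantial theorems away from a proof, and the paper does not claim otherwise.
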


It is also interesting to note that among knots earlier in Table \ref{T:UGKCC}, this phenomenon arises more quickly among knots with even, rather than odd, crossing number.  Parity of crossing number also appears to be significant in the columns $\gamma=1,2$ and $\Gamma=1,2$, which (unlike the other columns, apparently) do not increase monotonically. 

The pattern $1,0,1,0,1,\dots$ in the columns $\gamma=1$ and $\Gamma=1$ in Table \ref{T:UGKCC} persists indefinitely, because the only alternating knots with $\Gamma=1$ are the $(2,q)$-torus knots, each of which bounds an unknotted M\"obius band. No alternating knot or link can bound a knotted M\"obius band or annulus $F$ because $\partial \nu F$ would be an essential torus in the knot or link complement, but every alternating link which is not a $(2,q)$-torus knot or link is hyperbolic \cite{men84}.

\begin{table}[H]
\begin{center}
    \small
    $\begin{array}{| cc |ccccccccc|cccc|}
\hline\hline
c&\gamma,\Gamma& 1 & 2 & 3 & 4 & 5 & 6 & 7 & 8 & 9 & \text{mean} & \text{median} & \text{mode} & \text{max} \\
\hline\hline
\multirow{2}{4pt}{2}  & \Gamma & 1 &  &  &  &  &  &  &  &  & 1 & 1 & 1 & 1 \\ 
& \gamma &  & 1 &  &  &  &  &  &  &  & 2 & 2 & 2 & 2 \\ \hline
\multirow{2}{4pt}{4}  & \Gamma & 1 &  &  &  &  &  &  &  &  & 1 & 1 & 1 & 1 \\ 
& \gamma &  & 1 &  &  &  &  &  &  &  & 2 & 2 & 2 & 2 \\ \hline
\multirow{2}{4pt}{5}  & \Gamma &  & 1 &  &  &  &  &  &  &  & 2 & 2 & 2 & 2 \\ 
& \gamma &  & 1 &  &  &  &  &  &  &  & 2 & 2 & 2 & 2 \\ \hline
\multirow{2}{4pt}{6}  & \Gamma & 1  & 2 & 2 &  &  &  &  &  &  & 2.2 & 2 & 2,3 & 3 \\ 
  & \gamma &  & 2 & 3 &  &  &  &  &  &  & 2.6 & 3 & 3 & 3 \\ \hline
\multirow{2}{4pt}{7}  & \Gamma &  & 2 & 5 &  &  &  &  &  &  & 2.71 & 3 & 3 & 3\\
  & \gamma &  & 2 & 5 &  &  &  &  &  &  & 2.71 & 3 & 3 & 3 \\ \hline
\multirow{2}{4pt}{8}  & \Gamma & 1 & 2 & 12 & 6 &  &  &  &  &  & 3.10 & 3 & 3 & 4 \\ 
  & \gamma &  & 2 & 11 & 8 &  &  &  &  &  & 3.29 & 3 & 3 & 4 \\ \hline
\multirow{2}{4pt}{9}  & \Gamma &  & 4 & 22 & 29 &  &  &  &  &  & 3.45 & 4 & 4 & 4 \\ 
  & \gamma &  & 4 & 20 & 31 &  &  &  &  &  & 3.49 & 4 & 4 & 4 \\ \hline
\multirow{2}{4pt}{10}  & \Gamma & 1 & 4 & 39 & 89 & 41 &  &  &  &  & 3.95 & 4 & 4 & 5 \\
  & \gamma &  & 3 & 34 & 94 & 43 &  &  &  &  & 4.02 & 4 & 4 & 5 \\ \hline
\multirow{2}{4pt}{11} & \Gamma &  & 6 & 60 & 230 & 252 &  &  &  &  & 4.33 & 5 & 5 & 5 \\
  & \gamma &  & 6 & 52 & 233 & 257 &  &  &  &  & 4.35 & 5 & 5 & 5 \\ \hline
\multirow{2}{4pt}{12} & \Gamma & 1 & 5 & 95 & 482 & 1103 & 334 &  &  &  & 4.82 & 5 & 5 & 6 \\ 
  & \gamma &  & 3 & 77 & 485 & 1112 & 343 &  &  &  & 4.85 & 5 & 5 & 6 \\ \hline
\multirow{2}{4pt}{13}  & \Gamma &  & 9 & 133 & 989 & 3264 & 3144 &  &  &  & 5.25 & 5 & 5 & 6 \\ 
  & \gamma &  & 9 & 111 & 977 & 3276 & 3166 &  &  &  & 5.26 & 5 & 5 & 6 \\ \hline
\multirow{2}{4pt}{14}  & \Gamma & 1 & 7 & 195 & 1766 & 9153 & 16208 & 4481 &  &  & 5.72 & 6 & 6 & 7 \\
  & \gamma &  & 4 & 156 & 1725 & 9120 & 16310 & 4496 &  &  & 5.73 & 6 & 6 & 7 \\ \hline
\hline
    \end{array}$
    \caption{The number of prime alternating links with a given crossing number $c\leq 14$ and unoriented genus $\Gamma$ or crosscap number $\gamma$}
    \label{T:UGLCC}
\end{center}
\end{table}

\begin{table}[H]
\begin{center}
    \includegraphics[width=5in]{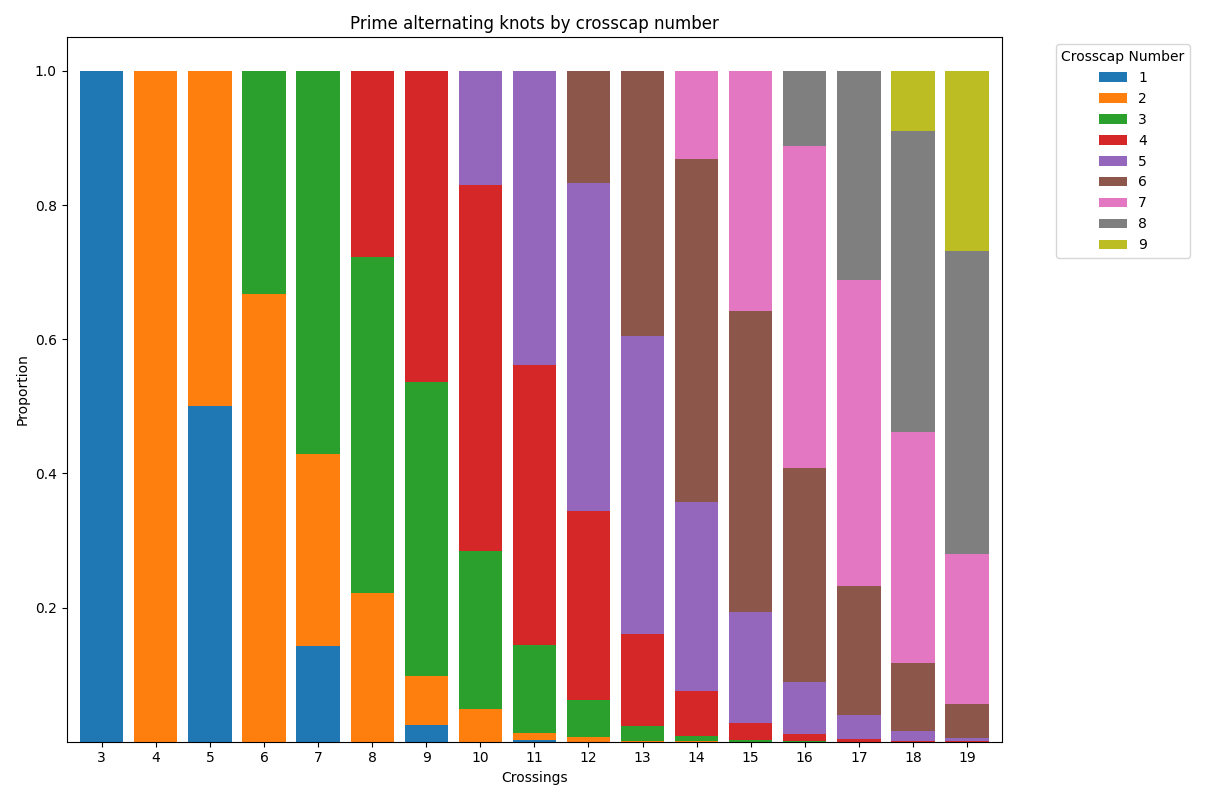}
    \caption{The proportion of crosscap numbers of prime alternating knots, by crossing number 
    }
    \label{T:CrosscapKStacked}
\end{center}
\end{table}

\begin{table}[H]
\begin{center}
    \includegraphics[width=5in]{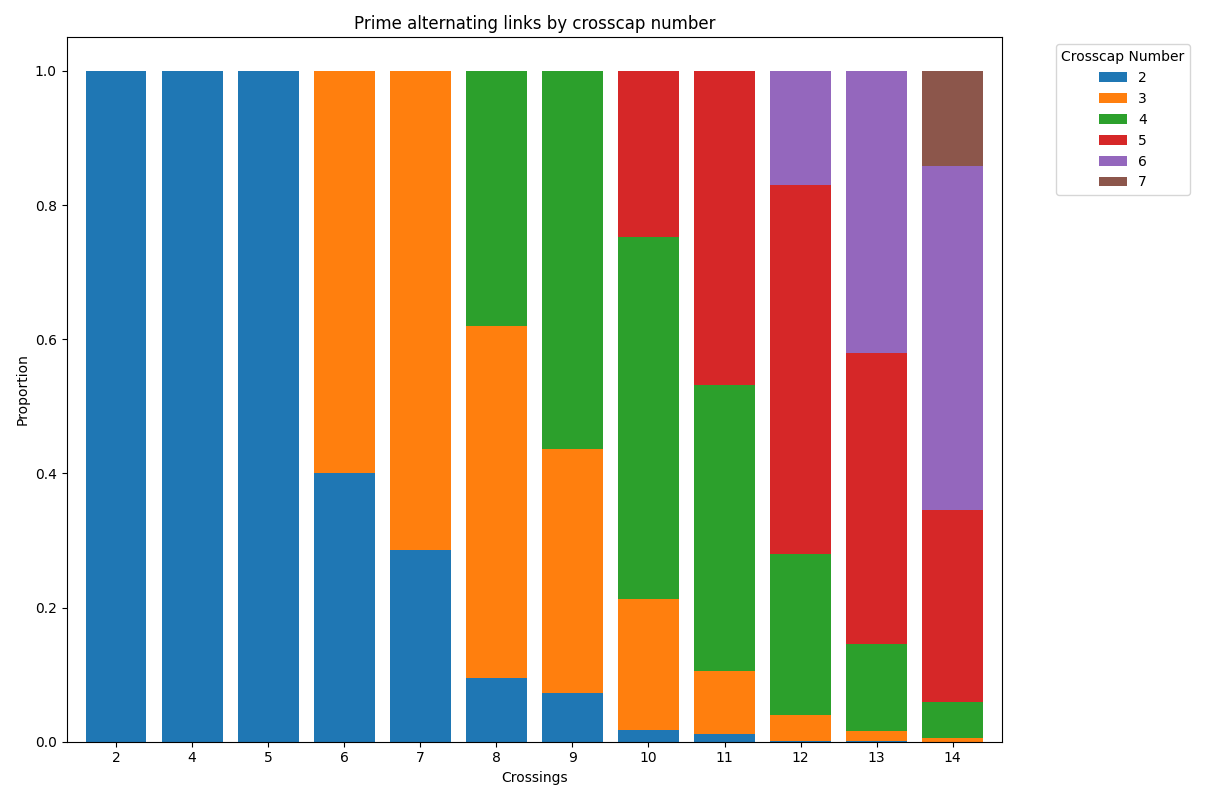}
    \caption{The proportion of crosscap numbers of prime alternating links, by crossing number 
    }
    \label{T:CrosscapLStacked}
\end{center}
\end{table}

We conjecture that several observable patterns in Table \ref{T:UGKCC} related to the monotonicity of the other columns persist indefinitely.

\begin{conjecture}\label{C:Monotonic}
Given $k\in\mathbb{Z}^+$, let $S_{\gamma=k}$ and $S_{\Gamma=k}$ respectively denote the set of prime alternating knots with $\gamma=k$ and $\Gamma=k$. 
\begin{enumerate}
    \item For any even $c\geq 8$, $S_{\gamma=2}$ contains \emph{fewer} $(c+1)$-crossing knots than $c$-crossing knots. The same is true of $S_{\Gamma=2}$ for all even $c\geq 12$. (This is counterintuitive.)
    \item For any $c\geq 3$, $S_{\gamma=2}$ and $S_{\Gamma=2}$ each contain more $(c+2)$-crossing knots than $c$-crossing knots.
    \item For any $c\geq 6$ and any $3\leq k\leq \frac{c}{2}$, $S_{\gamma=k}$ and $S_{\Gamma=k}$ each contain more $c$-crossing knots than $(c-1)$-crossing knots.
   \end{enumerate}
\end{conjecture}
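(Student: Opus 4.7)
The plan is to reformulate each claim as a statement about the combinatorial structure of optimal state graphs and then construct diagrammatic operations that inject one crossing-number class into another. By Theorem \ref{T:AK} applied to prime alternating diagrams, a prime alternating knot $K$ has $\Gamma(K)=k$ iff $K$ admits a $c$-crossing prime alternating diagram with an optimal state $x$ whose state graph $G_x$ satisfies $\beta_1(G_x)=k$; by Theorem \ref{T:CKLSV}, $\gamma(K)=\Gamma(K)$ unless $G_x$ is simple and bipartite, in which case $\gamma(K)=\Gamma(K)+1$. Thus parts (1)--(3) reduce to counting prime alternating diagrams, up to Tait flype equivalence, whose optimal state graphs realize a prescribed $\beta_1$ together with a prescribed simple/bipartite type.

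For part (2), the natural candidate operation is a local ``tangle extension'': replace a single crossing in the optimal diagram by the $3$-crossing rational tangle consisting of a bigon stacked beneath one additional crossing, which adds two crossings and exactly one state circle to $x$, preserving both $\beta_1(G_x)$ and its simple-bipartite type. This produces a $(c+2)$-crossing prime alternating knot with the same invariant value, and injectivity on knot types should follow from the Tait flyping theorem once the inserted tangle is placed canonically (say, at the combinatorially smallest crossing in the canonical flype class). For part (3), with fixed $3\leq k\leq c/2$, I would construct a single-crossing extension that subdivides an edge in a non-loop cycle of $G_x$ into a length-two path, adding one crossing and one state circle while keeping $\beta_1(G_x)=k$. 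This operation is available whenever $\beta_1(G_x)\geq 3$, because such graphs necessarily contain a cycle with enough local flexibility to accept the subdivision; the fact that this fails for $k=2$ already suggests qualitatively why $k=2$ behaves differently.

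The main obstacle is part (1), the counterintuitive decrease from even $c$ to odd $c+1$. My plan here is to classify $S_{\Gamma=2}$ and $S_{\gamma=2}$ exactly among prime alternating knots. Since $\beta_1(G_x)=2$, once pendant trees are contracted $G_x$ becomes either a two-loop bouquet or a generalized theta graph with three parallel edges joining two vertices. Decoding which prime alternating knot types arise in each case, and then matching them against admissible crossing numbers, should reveal that the theta-graph case constrains the three parallel edge-lengths in a way that favors even $c$, while the bouquet case is governed by separate constraints with different parity behavior. The hardest step will be making this classification simultaneously exhaustive, parity-sensitive, and sharp enough to prove the precise alternation conjectured, rather than just a rough asymptotic; I expect success to hinge on isolating a ``parity index'' intrinsic to the flype class of a $\beta_1(G_x)=2$ diagram, rather than on brute enumeration, and to interact non-trivially with the simple/bipartite dichotomy that distinguishes $\Gamma$ from $\gamma$.
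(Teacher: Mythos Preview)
The statement you are attempting to prove is labeled \textbf{Conjecture} in the paper, and the paper does \emph{not} prove it. The authors present it as an open problem motivated by the computational data in Table~\ref{T:UGKCC}, and their only remark toward a proof is the suggestion that parts (1) and (2) ``for $\gamma$'' might be approachable via the Ito--Takimura classification of prime alternating knots with $\gamma=2$ \cite{it}, and part (3) for $\gamma=3$ via \cite{it3}. There is therefore no proof in the paper to compare your attempt against.

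That said, your proposal is a plan rather than a proof, and several of its steps have genuine gaps. For part (2), your tangle extension ``adds two crossings and exactly one state circle,'' but since $\beta_1(F_x)=1+c-s$, this would change $\beta_1$ by $+1$, not preserve it; you need two new state circles, not one. More seriously, for both parts (2) and (3) you need your diagrammatic operation to induce an \emph{injection on knot types}, not just on diagrams. Appealing to the Tait flyping theorem and a ``canonical'' insertion point does not suffice: two non-flype-equivalent $c$-crossing diagrams can represent the same knot only if they are flype-equivalent, but after your insertion the resulting $(c{+}2)$-crossing diagrams could be flype-equivalent to each other even if the originals were not, or a single knot type could arise from your construction applied to two different starting knots. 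You also do not verify that the resulting diagram is still a \emph{knot} (one component) rather than a link, nor that it remains prime. For part (1), what you describe is essentially a call to carry out the Ito--Takimura classification and extract parity information from it; this is exactly the route the paper suggests but does not execute, and your sketch does not advance it beyond that suggestion.
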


For prime alternating links, we observe similar behaviors in Table~\ref{T:UGLCC} for crosscap number and unoriented genus.  
The distinctive patterns observed for knots with $\gamma=2$ or $\Gamma=2$, and described in parts (1) and (2) of Conjecture \ref{C:Monotonic}, is less pronounced for links; it is less clear what behavior to expect for such links as crossing number increases. Filtering the data by number of link components might add clarity here.
All frequencies for links with $\gamma\geq 3$ or $\Gamma \geq 3$ in Table \ref{T:UGLCC} increase with crossing number, and we conjecture that this behavior persists.

It seems feasible that one could prove the first two parts of Conjecture \ref{C:Monotonic}, at least for $\gamma$, by using the classification of prime alternating knots with $\gamma=2$ provided by Ito and Takimura in \cite{it}.  Perhaps one could also prove the third part of the conjecture, at least for $\gamma=3$, using the classification of prime alternating knots with $\gamma=3$ provided by Ito and Takimura in \cite{it3}.

A particularly striking observation in Tables \ref{T:UGKCC} and \ref{T:UGLCC} is just how uncommon it is for a knot or link to have $\gamma>\Gamma$. Rather, the vast majority of knots and links in the tables satisfy $\gamma=\Gamma$, meaning that the unoriented genus is realized by a 1-sided surface.

Tables \ref{T:DefectK} and \ref{T:DefectL} exhibit this phenomenon explicitly by enumerating how many prime alternating knots and links, respectively, have crosscap number $\gamma>\Gamma$. The tables also indicate the proportions at which such examples occur, by crossing number.  
Table~\ref{T:DefectProp} plots these proportions on a log-linear scale.  Observe that the data suggest exponential relationships between crossing number and the proportions of both prime alternating knots and links with $\gamma>\Gamma$.  

\begin{conjecture}
    Consider, among all prime alternating knots, the proportion $\varepsilon_K(c)$ that have crosscap number greater than unoriented genus, computed by crossing number $c$.  Similarly, let $\varepsilon_L(c)$ be this proportion for prime alternating links.
    Then, both proportions exponentially decay with respect to crossing number, i.e., 
    $$\varepsilon_K(c) \propto e^{-c\lambda_K } \qquad and \qquad \varepsilon_L(c) \propto e^{-c\lambda_L },$$
    for positive constants $\lambda_K,\lambda_L$.
    In particular, both proportions approach zero as the crossing number goes to infinity.  
    \label{C:LogFreq}
\end{conjecture}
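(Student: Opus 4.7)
The plan is to combine Theorem~\ref{T:CKLSV} with an asymptotic enumeration argument. By Theorem~\ref{T:CKLSV}, for $c\geq 3$, a prime alternating knot or link $L$ with $c$-crossing diagram $D$ satisfies $\gamma(L)>\Gamma(L)$ if and only if the state graph $G_x$ of an optimal Kauffman state $x$ is \emph{both} bipartite and simple. Thus $\varepsilon_K(c)$ and $\varepsilon_L(c)$ measure the proportion of prime alternating diagrams whose optimal state graph meets these two restrictive global conditions. Reframing the conjecture this way reduces the problem to a structural enumeration question about Tait graphs of prime alternating diagrams.

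First, I would seek an exponential upper bound on the number of $c$-crossing prime alternating diagrams whose optimal state graph is bipartite and simple. Recall that bipartiteness of $G_x$ is equivalent, by the Observation in \textsection\ref{S:Background}, to 2-sidedness of the state surface $F_x$, which in turn imposes a strong parity structure on the checkerboard/Tait graphs of $D$. Adding simplicity (no two state disks joined by more than one crossing band, together with adequacy) further restricts the admissible diagrams. I would expect such diagrams to lie in a structurally constrained subfamily---for instance, certain subfamilies of Montesinos or pretzel-type links with specified tangle parities---whose growth rate is genuinely smaller than that of arbitrary prime alternating diagrams.

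Next, I would compare this bound with the known asymptotic growth rate of prime alternating knots and links established by Sundberg--Thistlethwaite (and sharpened by Kunz-Jacques--Schaeffer), which grows like $c_0\rho^c/c^\alpha$ for an explicit $\rho>1$. If the restricted count grows at rate $\rho'<\rho$, then
\[
\varepsilon_K(c),\ \varepsilon_L(c) \;\lesssim\; (\rho'/\rho)^{c},
\]
giving exponential decay with $\lambda_K = \log(\rho/\rho')$ and analogously $\lambda_L$. Separately, one could try to leverage Ito--Takimura's explicit classifications of prime alternating knots with $\gamma=2$ \cite{it} and $\gamma=3$ \cite{it3} to handle the low-$\gamma$ contributions, where the structural families should be amenable to direct counting.

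The main obstacle is establishing the strict inequality $\rho'<\rho$ with quantitative control. Bipartiteness of $G_x$ is a global condition whose frequency within the class of 4-regular planar graphs underlying prime alternating diagrams is not obvious to estimate; naive local probabilistic arguments only give $o(1)$ decay, not exponential. A promising route is a surgery-style injection: starting from any prime alternating diagram with bipartite simple optimal state graph, produce a family of $\Theta(c)$ distinct $c'$-crossing prime alternating diagrams (with $c'$ only slightly larger than $c$) that \emph{fail} the bipartite-simple condition, and verify that different starting diagrams yield disjoint images. Such a construction would yield the required exponential gap. Making this construction robust across all values of $\gamma$, rather than only the small values already classified in \cite{it,it3}, appears to be the crux of the problem.
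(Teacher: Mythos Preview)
The statement in question is a \emph{conjecture}, not a theorem: the paper explicitly labels it as Conjecture~\ref{C:LogFreq} and offers no proof. The paper's supporting evidence consists solely of the computational data in Tables~\ref{T:DefectK}--\ref{T:DefectL}, the log-linear plot in Table~\ref{T:DefectProp}, and a linear regression yielding the empirical fits $\varepsilon_K(c)\approx 5.25e^{-0.545c}$ and $\varepsilon_L(c)\approx 4.86e^{-0.461c}$. The only theoretical remark is that an analogous statement for 2-bridge knots is proved as Theorem~5.2 of \cite{cklsv}. There is therefore no ``paper's own proof'' against which to compare your proposal.

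What you have written is not a proof but a research outline, and you acknowledge as much when you identify the ``main obstacle.'' Your reformulation via Theorem~\ref{T:CKLSV} is correct and is indeed the natural starting point. However, the crucial step---showing that prime alternating diagrams whose optimal state graph is simultaneously bipartite and simple grow at a strictly smaller exponential rate $\rho'<\rho$ than all prime alternating diagrams---is left entirely open. The surgery/injection idea you sketch is plausible in spirit, but you give no construction, and in particular you do not address why the optimal state (which can change under local modifications) would reliably lose bipartiteness or simplicity. Until that gap is closed, this remains a strategy rather than a proof, which is consistent with the conjecture's status in the paper.
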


A linear regression of our data in Table~\ref{T:DefectProp} calculates $\varepsilon_K(c) \approx 5.25e^{-0.545c}$ for knots (using  $7\leq c \leq 19$) and $\varepsilon_L(c) \approx 4.86e^{-0.461c}$ for links (using $8\leq c \leq 14$).  (No examples of $\gamma<\Gamma$ occur for prime alternating knots with $c<7$ crossings. The only four examples that occur for prime alternating links with $c<8$ crossings are described in Conjecture~\ref{C:gamma3ex}.)  

Theorem 5.2 of \cite{cklsv} is a statement analogous to Conjecture \ref{C:LogFreq}, but for 2-bridge knots. It provides exact formulas for $\varepsilon_K(c)$ in the 2-bridge case for every crossing number $c$.

\begin{table}[H]
        \begin{center}
    \small
    \begin{tabular}{| c |cccc| c c c|}
\hline\hline
\diagbox{$c$}{$\gamma$} & 3 & 5 & 7 & 9 & total & \# of knots & proportion \\
\hline\hline
$3$ & $ $ & $  $ & $  $ & $  $ & $ 0 $ & $ 1 $ & $ 0$\\ \hline
$4$ & $ $ & $  $ & $  $ & $  $ & $ 0 $ & $ 1 $ & $ 0$\\ \hline
$5$ & $ $ & $  $ & $  $ & $  $ & $ 0 $ & $ 2 $ & $ 0$\\ \hline
$6$ & $ $ & $  $ & $  $ & $  $ & $ 0 $ & $ 3 $ & $ 0$\\ \hline
$7$ & $1 $ & $  $ & $  $ & $  $ & $ 1 $ & $ 7 $ & $ 0.14286$\\ \hline
$8$ & $1 $ & $  $ & $  $ & $  $ & $ 1 $ & $ 18 $ & $ 0.05556$\\ \hline
$9$ & $2 $ & $  $ & $  $ & $  $ & $ 2 $ & $ 41 $ & $ 0.04878$\\ \hline
$10$ & $1 $ & $ 1 $ & $  $ & $  $ & $ 2 $ & $ 123 $ & $ 0.01626$\\ \hline
$11$ & $3 $ & $ 2 $ & $  $ & $  $ & $ 5 $ & $ 367 $ & $ 0.01362$\\ \hline
$12$ & $2 $ & $ 8 $ & $  $ & $  $ & $ 10 $ & $ 1288 $ & $ 0.00776$\\ \hline
$13$ & $4 $ & $ 19 $ & $  $ & $  $ & $ 23 $ & $ 4878 $ & $ 0.00472$\\ \hline
$14$ & $2 $ & $ 42 $ & $ 3 $ & $  $ & $ 47 $ & $ 19536 $ & $ 0.00241$\\ \hline
$15$ & $6 $ & $ 85 $ & $ 25 $ & $  $ & $ 116 $ & $ 85263 $ & $ 0.00136$\\ \hline
$16$ & $3 $ & $ 150 $ & $ 132 $ & $  $ & $ 285 $ & $ 379799 $ & $ 0.00075$\\ \hline
$17$ & $7 $ & $ 278 $ & $ 586 $ & $  $ & $ 871 $ & $ 1769979 $ & $ 0.00049$\\ \hline
$18$ & $3 $ & $ 435 $ & $ 2069 $ & $ 42 $ & $ 2549 $ & $ 8400285 $ & $ 0.00030$\\ \hline
$19$ & $9 $ & $ 742 $ & $ 6395 $ & $ 458 $ & $ 7604 $ & $ 40619385 $ & $ 0.00019$\\ \hline
\hline
    \end{tabular}
    \caption{The number and proportion of prime alternating knots with $c\leq 19$ crossings and $\gamma>\Gamma$}
    \label{T:DefectK}
    \end{center}
\end{table}

\begin{table}[H]
        \begin{center}
    \small
    \begin{tabular}{| c |cccccc| c c c|}
\hline\hline
\diagbox{$c$}{$\gamma$} & 2 & 3 & 4 & 5 & 6 & 7 & total & \# of links & proportion \\
\hline\hline
$2$ & $1 $ & $  $ & $  $ & $  $ & $  $ & $  $ & $ 1 $ & $ 1 $ & $ 1.0$\\ \hline
$4$ & $ 1$ & $  $ & $  $ & $  $ & $  $ & $  $ & $ 1 $ & $ 1 $ & $ 1.0$\\ \hline
$5$ & $ $ & $  $ & $  $ & $  $ & $  $ & $  $ & $ 0 $ & $ 1 $ & $ 0$\\ \hline
$6$ & $1 $ & $ 1 $ & $  $ & $  $ & $  $ & $  $ & $ 2 $ & $ 5 $ & $ 0.4$\\ \hline
$7$ & $ $ & $  $ & $  $ & $  $ & $  $ & $  $ & $ 0 $ & $ 7 $ & $ 0 $\\ \hline
$8$ & $ 1$ & $ 1 $ & $ 2 $ & $  $ & $  $ & $  $ & $ 4 $ & $ 21 $ & $ 0.19048 $\\ \hline
$9$ & $ $ & $  $ & $ 2 $ & $  $ & $  $ & $  $ & $ 2 $ & $ 55 $ & $ 0.03636 $\\ \hline
$10$ & $1 $ & $ 2 $ & $ 7 $ & $ 2 $ & $  $ & $  $ & $ 12 $ & $ 174 $ & $ 0.06897 $\\ \hline
$11$ & $ $ & $  $ & $ 8 $ & $ 5 $ & $  $ & $  $ & $ 13 $ & $ 548 $ & $ 0.02372 $\\ \hline
$12$ & $ 1$ & $ 3 $ & $ 21 $ & $ 18 $ & $ 9 $ & $  $ & $ 52 $ & $ 2020 $ & $ 0.02574 $\\ \hline
$13$ & $ $ & $ 22 $ & $ 34 $ & $ 22 $ & $  $ & $  $ & $ 78 $ & $ 7539	$ & $	0.01035$\\ \hline
$14$ & $ 1$ & $ 4 $ & $ 43 $ & $ 84 $ & $ 117 $ & $ 15 $ & $ 264 $ & $ 31811 $ & $ 0.00830$\\ \hline
\hline
    \end{tabular}
    \caption{The number and proportion of prime alternating links (with multiple components) with $c\leq 14$ crossings and $\gamma>\Gamma$}
    \label{T:DefectL}
    \end{center}
\end{table}

\begin{conjecture}
    For each even crossing number $2k\geq 6$, there are exactly $p(k,3) = \left\lfloor \frac{k^{2} + 3}{12} \right\rfloor$
    prime alternating links $L$ with $\Gamma(L)<\gamma(L)=3$, namely the pretzel links $P(2\ell,2m,2n)$ with $0<\ell\leq m\leq n$ and $\ell+m+n=k$.  Here, $p(k,3)$ denotes the number of partitions of $k$ into three (positive) parts \cite{oeis}.
    \label{C:gamma3ex}
\end{conjecture}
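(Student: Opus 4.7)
For each unordered triple $0<\ell\le m\le n$ with $\ell+m+n=k$, I would take the standard prime alternating diagram $D$ of $L=P(2\ell,2m,2n)$, with $2k$ crossings and three components, and exhibit the optimal Kauffman state $x$ whose state circles are the two ``outer'' (top and bottom) regions of $D$ together with the $(2\ell-1)+(2m-1)+(2n-1)$ bigons inside the three twist columns, for a total of $s=2k-1$ state circles. Then $\beta_1(F_x)=1+2k-s=2$, giving $\Gamma(L)\le 2$; since $L$ has three components, any connected spanning surface has $\beta_1\ge 2$, so $\Gamma(L)=2$. The state graph $G_x$ is the theta graph $\Theta_{2\ell,2m,2n}$: two hub vertices (the outer regions) joined by three internally disjoint paths whose interior vertices are the bigons. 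All three cycles have even length (bipartite), and every path has length at least $2$ (simple), so Theorem~\ref{T:CKLSV} yields $\gamma(L)=\Gamma(L)+1=3$.

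\textbf{Converse.} Suppose $L$ is a prime alternating link of more than one component with $\Gamma(L)=2$ and $\gamma(L)=3$. Theorem~\ref{T:AK} supplies a prime alternating diagram $D$ with an optimal state $x$ satisfying $\beta_1(F_x)=2$, and Theorem~\ref{T:CKLSV} makes $G_x$ connected, bipartite, simple, with $\beta_1(G_x)=2$. Since $F_x$ is 2-sided with $\beta_1=2$ and $b$ boundary components, the relation $2g+b=3$ together with $b\ge 2$ forces $(g,b)=(0,3)$, so $L$ has three components. Every connected graph with $\beta_1=2$ is a theta graph, a dumbbell (two disjoint cycles joined by a path), or a figure-eight (two cycles sharing a vertex); only the theta graph is 2-vertex-connected. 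The main obstacle is ruling out the other two types: I would argue that a cut vertex $v$ in $G_x$ produces a Murasugi-sum decomposition of $F_x$ along the state disk for $v$, which in an alternating diagram yields an embedded sphere meeting $D$ transversely in exactly two points and separating crossings nontrivially, contradicting the primeness of $D$. Granting this, $G_x=\Theta_{a_1,a_2,a_3}$ with $a_1+a_2+a_3=c=2k$ and all $a_i$ of one parity; since $2k$ is even, each $a_i$ is even, so write $a_i=2\ell_i$, and simplicity becomes automatic because $a_i\ge 2$. An isotopy collapsing the three bigon chains identifies $F_x$ with the standard three-banded pretzel surface of $P(2\ell_1,2\ell_2,2\ell_3)$, and since the embedded surface determines its boundary link, $L$ must be this pretzel.

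\textbf{Counting.} Each such $L$ corresponds to an unordered triple $\{\ell_1,\ell_2,\ell_3\}$ of positive integers summing to $k$, i.e., a partition of $k$ into exactly three parts, and $p(k,3)=\lfloor(k^2+3)/12\rfloor$ is a standard identity (cf.~\cite{oeis}). Distinct such triples yield distinct pretzels (primeness of alternating diagrams rules out coincidences), so the count is exact.
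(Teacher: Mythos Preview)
This statement is presented in the paper as a \textbf{conjecture}, not a theorem; the paper offers no proof (only the remark about mirror-image conventions immediately following it). So there is no paper proof to compare against, and your proposal should be read as an attempt to \emph{settle} an open conjecture.

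Your forward direction is essentially sound: the bigon-chain state of $P(2\ell,2m,2n)$ does have $2k-1$ circles, its state graph is the subdivided theta graph $\Theta_{2\ell,2m,2n}$, and Theorem~\ref{T:CKLSV} applies as you say. Minor omissions (primeness of the pretzel, minimality of its alternating diagram) are routine.

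The converse, however, has a genuine gap at exactly the point you flag as ``the main obstacle.'' Your claim that a cut vertex $v$ in $G_x$ forces a Murasugi-sum decomposition, hence a connect-sum decomposition of $D$, is not correct in general. A Murasugi-sum along the state disk $D_v$ requires the crossing bands incident to $D_v$ to be separable into two contiguous groups along $\partial D_v$; a cut vertex in the abstract graph $G_x$ does not guarantee this---the bands going to the two sides of the cut may be interleaved around the state circle, and then no $2$-point sphere exists.

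The paper's own data shows this gap is fatal to your argument as written. Look at Table~\ref{T:DefectL}: at $c=13$ there are $22$ prime alternating links with $\Gamma=2$ and $\gamma=3$. By your own orientability computation these all have three components, so their optimal state graphs are simple, bipartite, with $\beta_1=2$. But a theta graph $\Theta_{a_1,a_2,a_3}$ with $a_1+a_2+a_3=13$ and bipartite forces all $a_i$ odd, and the corresponding alternating diagram is then a pretzel \emph{knot}, not a three-component link. Hence none of these $22$ links have theta-type state graphs: they must be dumbbell or figure-eight types, which do contain cut vertices, yet arise from prime diagrams. Your argument, which nowhere uses the parity of $c$, would if valid rule out all such links at odd $c$---so it proves too much.

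Step 7 (theta state graph forces the link to be the specific pretzel) also needs real work: you must argue that the planar state itself, not just the abstract graph, is determined. For the theta type this is plausible via Whitney-style uniqueness of planar embeddings of $3$-connected graphs, but you have not supplied that argument, and for the dumbbell/figure-eight types (which, per the data, genuinely occur) uniqueness of embedding fails outright.
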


Regarding Conjecture \ref{C:gamma3ex}, recall that, by convention, we assume that links have more than one component, we consider a link and its mirror image to be equivalent, and $\gamma$ and $\Gamma$ are equal for mirror images. 

We offer one more conjecture, which is motivated as follows.  According to Theorem \ref{T:AK}, a knot or link $K$ with an alternating diagram $D$ satisfies $\gamma(K)>\Gamma(K)$ if and only if (1) there is a Kauffman state $x$ of $D$ which has strictly more state circles than all other states of $D$, and (2) the state surface from $x$ is 2-sided. The number of 2-sided state surfaces of $D$ equals $2^{|K|}$, and so, heuristically, one would expect a random $n$-component, $c$-crossing alternating link to be $2^{n-1}$ times more likely than a random $c$-crossing alternating knot to have $\gamma>\Gamma$. On the other hand, one would expect some noise in the data to make this phenomenon emerge in a stable way only for sufficiently large crossing numbers.  Indeed, this is what we find in the data, with the curious addendum that the phenomenon emerges earlier when crossing numbers are even than it does when crossing numbers are odd.

\begin{table}[H]
\begin{center}
    \includegraphics[width=4in]{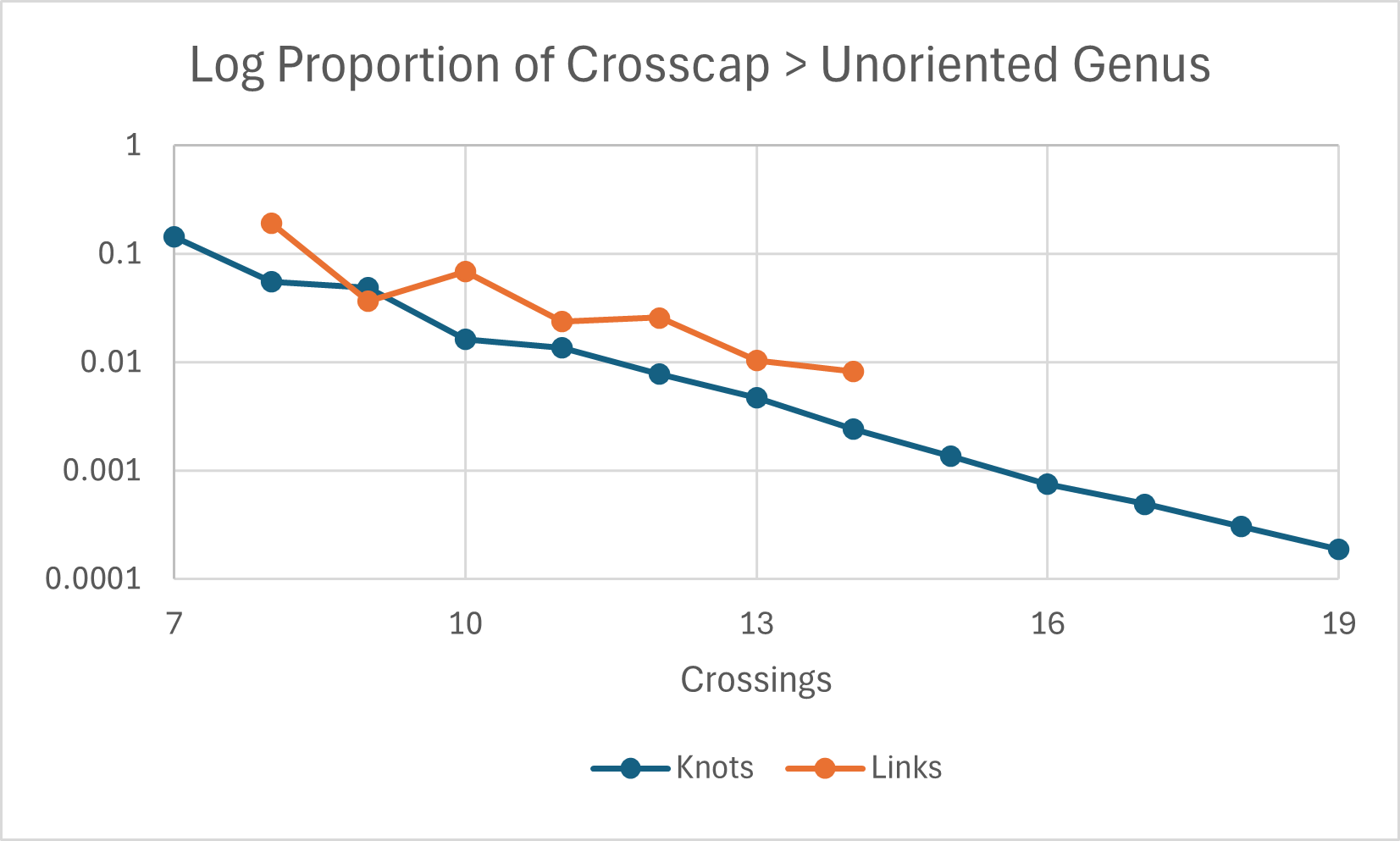} \\
    \caption{The proportions of prime alternating knots and of links with crosscap number greater than unoriented genus indicates a log-linear relationship with crossing number.  See Conjecture~\ref{C:LogFreq}.}
    \label{T:DefectProp}
    \end{center}
    \end{table}

\begin{conjecture}
    For all even $c\geq 4$ and all odd $c\geq 11$, the proportion of $c$-crossing prime alternating links $L$ with $\gamma(L)>\Gamma(L)$ is larger than the proportion of $c$-crossing prime alternating knots $K$ with $\gamma(K)>\Gamma(K)$. 
\end{conjecture}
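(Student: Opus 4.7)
The plan is to split the conjecture into a finite verification covered by the tables and an asymptotic argument for the remainder. For the small crossing numbers within the computed census, namely $c\in\{4,6,8,10,12,14\}$ with even parity and $c\in\{11,13\}$ with odd parity, the inequality $\varepsilon_L(c)>\varepsilon_K(c)$ follows immediately by comparing the proportion columns of Tables~\ref{T:DefectK} and~\ref{T:DefectL}; e.g., at $c=10$ one gets $12/174\approx 6.9\%$ for links versus $2/123\approx 1.6\%$ for knots, and analogous comparisons dispatch all other cases in range. The nontrivial content is thus the conjecture's claim for all even $c\geq 16$ and all odd $c\geq 15$, where the present data do not tabulate the link side.

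For that asymptotic regime, I would build on Theorem~\ref{T:CKLSV}: $\gamma(L)>\Gamma(L)$ holds iff some optimal Kauffman state $x$ of a prime alternating diagram $D$ of $L$ has both a simple and a bipartite state graph. Bipartiteness of $G_x$ is equivalent to 2-sidedness of $F_x$, which occurs iff $x$ is the Seifert smoothing associated with some orientation of $L$. Hence an $n$-component link admits $2^{n-1}$ candidate ``witness'' states, compared to a single one for a knot. The plan is then to (i) prove a structural lemma giving a local diagrammatic criterion, phrased in the Adams--Kindred $m$-gon framework of Proposition~\ref{P:123gon}, for when a Seifert state of $D$ is simultaneously a state-circle maximizer and yields a simple state graph; (ii) stratify the prime alternating link census by component count; and (iii) combine with asymptotic growth rates for prime alternating knots and links (refining Sundberg--Thistlethwaite by component count) to express $\varepsilon_L(c)$ in terms of $\varepsilon_K(c)$ multiplied by a factor that averages the weights $2^{n-1}$ over the component strata and therefore strictly exceeds $1$ once $c$ is large.

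The main obstacle lies in step (i): there is no known closed-form test determining when a Seifert state of a prime alternating diagram is optimal, and without such control the probabilities attached to each of the $2^{n-1}$ candidate Seifert states cannot be compared cleanly across the knot and link censuses. A plausible entry point is to adapt the splice-unknotting machinery of Ito--Takimura~\cite{it} that underlies the second author's work~\cite{tksplice}, which quantifies precisely the gap between Seifert genus and unoriented genus for alternating knots. A secondary difficulty is explaining the parity gap in the hypothesis, wherein the inequality is conjectured from $c\geq 4$ for even crossings but only from $c\geq 11$ for odd crossings. This likely traces to the lingering effect of low-crossing alternating knots with maximal $\gamma$—in particular the $(2,q)$-torus knots highlighted after Table~\ref{T:UGKCC}, which are the unique alternating knots with $\Gamma=1$ and whose relative share of the odd-crossing census decays more slowly than the component-counting advantage grows on the link side, swamping the inequality at small odd $c$.
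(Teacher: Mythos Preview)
The statement in question is a \emph{conjecture}, not a theorem, and the paper offers no proof of it whatsoever. What the paper provides is the heuristic motivation immediately preceding the conjecture (the $2^{n-1}$ orientation-counting argument) together with the numerical evidence in Tables~\ref{T:DefectK}--\ref{T:DefectL}. So there is no ``paper's own proof'' against which to compare your proposal.

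Your proposal is not a proof either, and to your credit you say as much: the finite-range verification from the tables is fine, but the asymptotic part is a research outline whose central step~(i)---a diagrammatic criterion for when a Seifert state is simultaneously the unique state-circle maximizer and has simple state graph---is explicitly flagged as an open obstacle. Without that step, the $2^{n-1}$ weighting remains exactly the heuristic the paper already gives, not a rigorous comparison of $\varepsilon_L(c)$ and $\varepsilon_K(c)$. Your step~(iii) also presupposes refined Sundberg--Thistlethwaite-type asymptotics stratified by component count, which would itself require substantial work.

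One specific point deserves correction: your proposed explanation of the parity threshold via $(2,q)$-torus knots does not hold up. Those knots satisfy $\gamma=\Gamma=1$, so they contribute to neither numerator, and there is only one per odd crossing number, so their effect on the denominator is negligible. The failure of the inequality at odd $c\in\{5,7,9\}$ (where the link proportion is $0$, $0$, and $2/55$ against knot proportions $0$, $1/7$, and $2/41$) is more plausibly small-sample noise on the link side than any torus-knot phenomenon on the knot side.
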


We remind the reader that our data and code will be available at 
\url{https://github.com/IsaiasBahena/minimal-genus-state-surfaces}.

\vspace{5cm}

\end{document}